\theoremstyle{plain}
  \newtheorem{theorem}{Theorem}[section]
  \newtheorem*{theorem*}{Theorem}
  \newtheorem*{corollary*}{Corollary}
  \newtheorem{lemma}[theorem]{Lemma}
  \newtheorem*{lemma*}{Lemma}
  \newtheorem{proposition}[theorem]{Proposition}
  \newtheorem*{proposition*}{Proposition}
\theoremstyle{definition}
  \newtheorem*{assumption*}{Assumption}
  \newtheorem*{conjecture*}{Conjecture}
  \newtheorem{definition}[theorem]{Definition}
  \newtheorem*{definition*}{Definition}
  \newtheorem*{example*}{Example}
  \newtheorem*{hypothesis*}{Hypothesis}
  \newtheorem*{property*}{Property}
  \newtheorem*{remark*}{Remark}
\newcommand{\E}{{\rm E} }
\newcommand{\Z}{ \mathbb{Z}}  % integers
\newcommand{\N}{\mathbb{N}}
\newcommand{\Ot}{ \widetilde{O}}  %tilde big-O
\newcommand{\lam}{\lambda}
\renewcommand{\vec}[1]{\mbox{\boldmath$#1$}}
\newcommand{\floor}[1]{\lfloor #1 \rfloor}
\begin{document}

% \title[short text for running head]{full title}
\title{Division algorithms for the fixed weight subset sum problem}

%    author one information
% \author[short version for running head]{name for top of paper}
\author[Andrew Shallue]{Andrew Shallue}
%\address{Department of Mathematics and Statistics, University of Calgary, Calgary, Alberta, Canada}
\address{Department of Mathematics and Computer Science, Illinois Wesleyan University, Bloomington, Illinois}
\email{ashallue@iwu.edu}
\thanks{Research supported by ICORE}

%    \subjclass is required.   was originally \subjclass[2000]{Primary}
\subjclass[2000]{Primary 11T71}

\keywords{Subset sum, knapsack cryptosystem, multi-set birthday problem}

%    Abstract is required.
\begin{abstract}
Given positive integers $a_1, \dots, a_n, t$, the fixed weight
subset sum problem is to find a subset of the $a_i$ that sum to $t$, 
where the subset has a prescribed number of elements.  It is this problem
that underlies the security of modern knapsack cryptosystems, and 
solving the problem results directly in a message attack.  We present
new exponential algorithms that do not rely on lattices, and hence
will be applicable when lattice basis reduction algorithms fail.
These algorithms rely on a generalization of the notion of splitting system
given by Stinson \cite{Stin02}.
In particular, if the problem has length $n$ and weight $\ell$ then 
for constant $k$ a power of two less than $n$
we apply a $k$-set birthday algorithm to the splitting system of the problem.
This randomized algorithm has time and space complexity that satisfies 
$T \cdot S^{\log{k}} = \Ot({n \choose \ell})$ (where the constant depends 
uniformly on $k$).  In addition to using space 
efficiently, the algorithm is highly parallelizable. 
\end{abstract}

\maketitle

\emph{Author's Foreword - January 2012}
\bigskip

While the present paper was being refereed,  \cite{HGJoux10} came out with an 
improvement to the main result.  The most interesting aspect that remains is the 
idea of a $k$-set splitting system.

% start of article
\section{Problem Statement}

Let $a_1, \dots, a_n$ and a target $t$ be positive integers.  The $\ell$-weight subset sum
problem is to find a subset of the $a_i$ that sum to $t$, where the subset has $\ell$ elements.
Equivalently, the problem is to find a bit vector $\vec{x}$ of length $n$ and Hamming weight $\ell$
such that 
\begin{equation}
\sum_{i=1}^n a_i x_i = t \enspace .    \label{statement}
\end{equation}
The corresponding decision problem is to determine whether or not a solution exists.
We will refer to  the {\it integer} subset sum problem as seeking a solution for 
(\ref{statement})
over the integers, while solving the {\it modular} subset sum problem involves solving
(\ref{statement}) over some ring $\Z/m\Z$.  A modular subset sum problem is {\it random} if 
we assume that the $a_i$ are chosen uniformly at random from $\Z/m\Z$.

The most important quantity associated with a subset sum problem is its \emph{density}, 
defined to be $\frac{n}{\log{A}}$ in the integer case where 
$A = {\rm max}_{1 \leq i \leq n} a_i$.  In the modular case we define density to be 
$\frac{n}{\log{m}}$ and will refer to it as \emph{modular density}.
Inspired by \cite{Kun08}, we define the \emph{information density} 
to be $\frac{\log{{n \choose \ell}}}{\log{A}}$ (for the integer case) and 
\emph{modular information density} to be $\frac{\log{{n \choose \ell}}}{\log{m}}$.  

The fixed weight subset sum problem is interesting both because it is NP-complete
and because it has applications to knapsack cryptosystems (see Section \ref{sec:knapsack}).
A brute force attack on the fixed weight subset sum problem takes $\Ot({n \choose \ell})$
bit operations.  Here $\Ot$ is ``Soft-Oh'' notation.  For functions $f$ and $g$, 
we say $f$ is $\Ot(g)$ if there exist $c, N \in \N$ such that 
$f(x) \leq g(n)(\log(3+g(n)))^c$ for all $n \geq N$.

Throughout this paper all logarithms will have base $2$.
Suppose $L$ is a set of integers and $a$ is an integer.  Then $L-a$ is the 
set given by $\{b - a : b \in L\}$ and $L-a \mod{m}$ is the set given 
by $\{b - a \mod{m}: b \in L\}$.

% new section - results
\section{Prior Work and New Results}\label{sec:results}

% might get thrown into knapsack section

%For all existing knapsack cryptosystems $n$ must be of at least moderate size
%to successfully resist a lattice reduction attack.  
%This paper is concerned with finding the 
%proper security bound on ${n \choose \ell}$.  

It is a nontrivial matter to apply the standard algorithmic technique of divide-and-conquer
to problems with fixed weight bit vectors.  One solution is to employ a $k$-set splitting
system.  Throughout most of this paper we assume that $n$ and $\ell$ are divisible by $k$.
See Section \ref{sec:gen_splitting} for a discussion of the general case.

\begin{definition}
An \emph{$(n, \ell, k)$-splitting system} is a set $X$ of $n$ indices along with a set $\mathcal{D}$
of divisions, where each division is itself a set $\{I_1, \dots, I_k\}$ of subsets of indices, 
with $I_1 \cup \dots \cup I_k = X$ and $| I_1 | = \dots = | I_k | = n/k$.  These objects have 
the property that for every $Y \subseteq X$ such that $|Y| = \ell$, there exists a division
$\{I_1, \dots, I_k\} \in \mathcal{D}$ such that $|Y \cap I_j| = \ell/k$ for $1 \leq j \leq k$.
We call this division a good division with respect to $Y$.
\end{definition}

All splitting systems will appear in the context of a 
fixed weight subset problem with unknown solution $Y$.  With $n$ and $\ell$ 
understood from context, we will refer to an $(n, \ell, k)$-splitting system as a 
$k$-set splitting system.  With $Y$ understood from context, we will call 
a division such that $|Y \cap I_j| = \ell/k$ for all $I_j$ a good division.

This is a generalization of $2$-set splitting systems presented by Stinson in \cite{Stin02},
which he called $(N; n, \ell)$-splitting systems.  In that paper design theory was 
utilized to minimize $N$, the number of divisions. 

Two set splitting systems allow for the application of the baby-step-giant-step 
algorithm to attain a square root time-space
tradeoff.  This had been done before Stinson, but without formalizing 
the notion of splitting systems.  A version of this algorithm that searches for 
a good division randomly is presented 
in \cite[Section 7.3]{ChRiv88} and applied to the fixed weight subset sum problem
as a message attack against knapsack cryptosystems.  Coppersmith developed the 
same algorithm for use on the fixed weight discrete logarithm problem, as well as 
a version that found a good $2$-division deterministically rather than randomly.
Both are presented in \cite{Stin02} along with an average case analysis.

Another line of attack on the fixed weight subset sum problem 
was revealed by the work of Nguyen and Stern \cite{NS05}.  
They modified the lattice basis reduction technique of \cite{Cos92} to also
work for problems of small pseudo-density $\frac{\ell \log{n}}{\log{A}}$.  
Thus problems can be reduced to the closest vector problem on lattices.
In practice this means that any problem with information density less than one
and $n$ less than $300$ or so can be solved by current lattice 
reduction algorithms.
 
We present new algorithms for the fixed weight subset sum problem, which in 
the case of Theorem \ref{thm:SS} is also a new algorithm for the fixed weight discrete 
logarithm problem.  
We use $T$ and $S$ to refer to the exponential term of an algorithm's time 
and space usage.

% two main theorems

\begin{theorem}\label{thm:SS}
There is an algorithm for the fixed weight subset sum problem whose time and 
space constraints lie on the curve $T \cdot S^2 = {n \choose \ell}$.
The deterministic version takes $\Ot(n^3 {n/4 \choose \ell/4}^2)$ bit operations
and the randomized version is expected to take 
$\Ot(\ell^{3/2} {n/4 \choose \ell/4}^2)$ bit operations.  Both have space 
complexity $\Ot({n/4 \choose \ell/4})$.
\end{theorem}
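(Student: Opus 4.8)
The plan is to build the algorithm from two ingredients: a $4$-set splitting system that reduces the fixed-weight problem to a collection of unrestricted (but smaller) sub-problems, and a $4$-set birthday / two-level meet-in-the-middle search on each good division. First I would fix notation: write $X = \{1,\dots,n\}$ for the index set, and suppose the unknown solution $Y$ has $|Y| = \ell$. For a division $\{I_1,I_2,I_3,I_4\}$ with $|I_j| = n/4$, set $t_j = \sum_{i \in Y \cap I_j} a_i$, so that $t_1 + t_2 + t_3 + t_4 = t$ and, if the division is good, each $Y \cap I_j$ has weight exactly $\ell/4$. The key observation is that once we have a good division, finding $Y$ amounts to: choose weight-$\ell/4$ subsets $Z_j \subseteq I_j$ with $\sum_j \sum_{i \in Z_j} a_i = t$, and this is exactly a $4$-list birthday problem where list $L_j$ consists of the $\binom{n/4}{\ell/4}$ subset sums of weight-$\ell/4$ subsets of $I_j$.

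Next I would describe the $4$-list birthday search, arranged as a balanced binary tree of depth $2$. At the bottom, I sort $L_1$ and $L_2$ and, for a chosen ``splitting'' modulus or prefix, form the list $L_{12}$ of all pairs whose partial sum lies in a fixed residue class of size $\approx \binom{n/4}{\ell/4}$; heuristically (and this is where the randomized analysis enters) $|L_{12}| \approx \binom{n/4}{\ell/4}$. Similarly form $L_{34}$ from $L_3, L_4$ targeting the complementary residue class so that $t$ is hit. Finally merge $L_{12}$ against $t - L_{34}$ by sorting. Each of the four base lists has size $S = \binom{n/4}{\ell/4}$; the intermediate lists also have size $\approx S$; and the total number of pairs examined at the top level is $|L_{12}| \cdot |L_{34}| / (\text{range}) \approx S^4 / S^2 = S^2$ — but crucially these are streamed, so the time is $\Ot(S^2)$ while the space stays $\Ot(S)$. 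Since $S^2 = \binom{n/4}{\ell/4}^2$ and a good division makes the four sublists have total ``mass'' $\binom{n}{\ell}$ when $n,\ell$ are divisible by $4$ — indeed $\binom{n/4}{\ell/4}^4 \le \binom{n}{\ell}$ up to polynomial factors, but more to the point the two-level search visits $\Ot(\binom{n/4}{\ell/4}^2) = \Ot(\binom{n}{\ell}^{1/2})$-ish... here I must be careful: the claimed curve is $T \cdot S^2 = \binom{n}{\ell}$, i.e. $T = \binom{n}{\ell}/S^2$, and with $S = \binom{n/4}{\ell/4}$ we need $T = \binom{n}{\ell}/\binom{n/4}{\ell/4}^2$; I would verify via the identity $\binom{n}{\ell} \le \binom{n/4}{\ell/4}^2 \binom{n/2}{\ell/2} / \binom{n/4}{\ell/4}^2 \cdot (\cdots)$ type estimates that $\binom{n}{\ell}/\binom{n/4}{\ell/4}^2$ is itself $\Ot(\binom{n/4}{\ell/4}^2)$, which follows from $\binom{n}{\ell} = \Ot(\binom{n/4}{\ell/4}^4 \cdot \mathrm{poly}(n))$ (each way of splitting a weight-$\ell$ set among four equal blocks has $\binom{n/4}{\ell/4}^4$ choices, and the number of distinct block-weight patterns is polynomial). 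This reconciles $T = \Ot(\binom{n/4}{\ell/4}^2)$ with the stated curve.

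Then I would handle the two flavours. For the \emph{deterministic} version, I replace the randomly-sampled good division by a deterministic $4$-set splitting system with polynomially many divisions (Coppersmith-style, generalizing the $2$-set construction of \cite{Stin02}); one iterates the birthday search over all $\Ot(n^c)$ divisions, and at least one is good by the splitting-system property. The $n^3$ factor absorbs both the number of divisions and the sorting/merging overhead, and the splitting modulus is chosen deterministically rather than at random, so the intermediate list sizes are controlled by a pigeonhole rather than a probabilistic argument. For the \emph{randomized} version, one samples a random equipartition of $X$ into four blocks; the probability it is good with respect to the fixed $Y$ is $\binom{\ell}{\ell/4}\binom{n-\ell}{n/4-\ell/4}\cdots / \binom{n}{n/4}\cdots$, which by Stirling is $1/\mathrm{poly}(\ell)$ — I expect $\Theta(\ell^{-3/2})$ after the standard central-binomial estimate, accounting for the $\ell^{3/2}$ prefactor — so an expected $\Ot(\ell^{3/2})$ trials suffice, each costing $\Ot(\binom{n/4}{\ell/4}^2)$ after the list sizes concentrate.

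The main obstacle I anticipate is controlling the sizes of the intermediate lists $L_{12}$ and $L_{34}$ in the deterministic case. With a random modulus the expected size is exactly $\binom{n/4}{\ell/4}$ and Markov/Chebyshev gives concentration cheaply; deterministically one must either argue a good modulus exists by averaging (then search over moduli, inflating the polynomial factor) or show the subset sums are sufficiently equidistributed. A secondary subtlety is that the birthday merge must be done in a streaming fashion so that the $S^2$ time does not cost $S^2$ space — this requires generating $L_{12}$ (or at least its relevant slice) on the fly while scanning the sorted $L_{34}$, which is standard but needs to be stated carefully. Everything else — the reduction via the splitting system, the binary-tree structure of the $4$-list search, and the binomial-coefficient bookkeeping to land on the curve $T \cdot S^2 = \binom{n}{\ell}$ — is routine once these two points are pinned down.
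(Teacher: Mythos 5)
Your reduction via a $4$-set splitting system, and your accounting for the deterministic ($n^3$ divisions) and randomized ($\Theta(\ell^{-3/2})$ success probability, matching Propositions \ref{det_splitting} and \ref{rand_splitting}) outer loops, agree with the paper. But the core search you propose is not the one the paper uses, and as written it does not prove the theorem. You run a Wagner-style two-level birthday search: fix a residue class mod $M \approx S$, keep only pairs from $L_1 \times L_2$ landing in that class, and merge against the complementary class from $L_3 \times L_4$. Two things go wrong. First, \emph{completeness}: the fixed-weight subset sum instance typically has a unique solution (that is the cryptographic setting the theorem targets), and that solution's partial sum $t_1+t_2$ lies in one specific, unknown residue class. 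A single choice of class finds it with probability only about $1/M$, which is exponentially small; you would have to enumerate all $M$ classes, and you never say so. Second, and more fundamentally, \emph{worst-case space}: even after enumerating all classes, the theorem's claim is an unconditional $\Ot\bigl({n/4 \choose \ell/4}\bigr)$ space bound, but an adversarial instance (e.g.\ all elements of $L_1, L_2$ congruent mod $M$) concentrates all $S^2$ pairs into one class, so $\max_c |L_{12}^{(c)}|$ can be $S^2$. You flag the list-size issue yourself, but the fixes you offer (averaging over moduli, concentration) control the \emph{average} class size, not the \emph{maximum}, and an averaging argument cannot rescue a worst-case space guarantee.

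The paper avoids both problems by using the Schroeppel--Shamir priority-queue enumeration rather than residue filtering: one defines a composition operator on subproblems, orders problems by their partial sum $b$ (breaking ties lexicographically) so that $\oplus$ is monotonic, and then walks through all $S^2$ pairs $(P_1 \oplus P_2, P_3 \oplus P_4)$ in sorted order using two priority queues that each hold at most one entry per element of $T_1$ (resp.\ $T_4$), hence at most $S$ entries at any time. The lists $L_{12}, L_{34}$ are never materialized, so the $\Ot(S)$ space and $\Ot(S^2)$ time bounds hold deterministically for every instance with no distributional assumption on the $a_i$. If you want to keep your architecture, you must either switch to this monotonic two-sided enumeration for the inner search, or explicitly restrict the theorem to a heuristic/average-case statement about the input --- which is weaker than what is claimed.
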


\begin{theorem}\label{thm:kset}
Choose parameters $m$ and $k$ so that $k$ is a power of $2$, 
$m < {n/k \choose \ell/k}$, and $\log{m} \geq 2(\log{k})^2$.
Assume that when reduced modulo $m$, 
the $a_i$ are uniformly random elements of $\Z/m\Z$.
Then there is a randomized algorithm for the fixed weight subset sum problem 
whose expected running time is
$\Ot(m^{1/(\log{k}+1)} \cdot {n \choose \ell}/m)$ and which uses 
$\Ot(m^{1/(\log{k}+1)})$  space.  This gives a point on the time/space tradeoff curve
$T \cdot S^{\log{k}} = {n \choose \ell}$.
\end{theorem}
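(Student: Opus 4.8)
The plan is to turn the integer instance into a large family of modular instances by means of a randomly generated $k$-set splitting system, to solve each modular instance with the $k$-set birthday algorithm, and to keep going until a modular solution lifts to a genuine integer solution. Since the hypotheses permit the assumption $k \mid n$ and $k \mid \ell$, set $d=\log k$ and $M=\lceil m^{1/(d+1)}\rceil$; this $M$ will be simultaneously the space bound and the common size of the $k$ lists fed to the birthday algorithm.

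The first step is the splitting-system reduction. If $Y$ denotes the unknown weight-$\ell$ solution, then a division $\{I_1,\dots,I_k\}$ obtained by splitting the $n$ indices uniformly at random into $k$ blocks of size $n/k$ is good for $Y$ (meaning $|Y\cap I_j|=\ell/k$ for all $j$) with probability $\binom{n/k}{\ell/k}^k/\binom{n}{\ell}$, and a Stirling estimate shows this is $\Omega(\ell^{-(d-1)/2})$ for $\ell\le n/2$; note that this is exactly the $\ell^{3/2}$ factor visible in Theorem~\ref{thm:SS} when $k=4$. Hence $\Ot(\ell^{(d-1)/2})$ random divisions land on a good one, an overhead that for fixed $k$ disappears into the $\Ot$. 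The same Stirling estimate gives $\binom{n/k}{\ell/k}^k=\Ot(\binom{n}{\ell})$, and I will use the two quantities interchangeably inside $\Ot$.

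With a good division in hand, I would build, for each $j$, a list $L_j$ of $M$ values $\sum_{i\in S}a_i\bmod m$ with $S\subseteq I_j$, $|S|=\ell/k$, obtained by sampling random such $S$. Because the $a_i$ reduce to independent uniform elements of $\Z/m\Z$, each of these partial sums is uniform on $\Z/m\Z$, and since $M\le m<\binom{n/k}{\ell/k}$ the lists exist and cost only $\Ot(M)$ samples to fill. I would then run the $k$-set birthday algorithm on $L_1,\dots,L_k$ with target $t\bmod m$: in time and space $\Ot(kM)=\Ot(M)$ it returns, with probability $\Omega(1)$, subsets $S_1,\dots,S_k$ (one from each list) with $\sum_j\sum_{i\in S_j}a_i\equiv t\pmod m$; I test the equation over $\Z$ and output $S_1\cup\dots\cup S_k$ if it holds, otherwise loop with fresh lists (and, when needed, a fresh division). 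The hypotheses $k=2^d$ and $\log m\ge 2d^2$ are used precisely to keep the expected-size estimates for the $d$ merge levels of this algorithm valid with a constant-factor Chebyshev cushion (in particular they force $M\ge k$). For the loop count, over a good division there are $\binom{n/k}{\ell/k}^k=\Ot(\binom{n}{\ell})$ candidate subsets of the correct block shape, one being the integer solution $Y$, while in expectation over the $a_i$ roughly $\binom{n}{\ell}/m$ of them are modular solutions; granting the uniformity claim discussed below, one pass hits $Y$ with probability $\Omega(m/\binom{n}{\ell})$, so $\Ot(\binom{n}{\ell}/m)$ passes suffice in expectation. Multiplying by the per-pass cost $\Ot(M)$ yields expected time $\Ot(m^{1/(d+1)}\binom{n}{\ell}/m)$ and space $\Ot(m^{1/(d+1)})$, and with $T=m^{1/(d+1)}\binom{n}{\ell}/m$ and $S=m^{1/(d+1)}$ we get $T\cdot S^{d}=(\binom{n}{\ell}/m)\,m^{(1+d)/(d+1)}=\binom{n}{\ell}$, which is the claimed curve $T\cdot S^{\log k}=\binom{n}{\ell}$.

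The main obstacle is the uniformity claim that a single pass finds the designated solution $Y$ with probability $\Omega(m/\binom{n}{\ell})$ (times the good-division factor). The easy half is that all $k$ of $Y$'s blocks fall into the sampled lists with probability $\approx(M/\binom{n/k}{\ell/k})^k=\Omega(M^k/\binom{n}{\ell})$. The hard half is that, conditioned on this, the birthday algorithm actually returns $(S_1,\dots,S_k)=Y$ rather than one of the $\approx M^k/m$ competing block-shape modular solutions inside $L_1\times\dots\times L_k$ — and its tree-merging procedure is nothing like a uniform sample. I expect to need an averaging argument over the randomness of the $a_i$ (which is what supplies the pseudorandom low-order bits that drive the merges) together with a second-moment bound pinning the number of competing solutions near its mean $M^k/m$; this is where $\log m\ge 2(\log k)^2$ really earns its place. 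If each in-list modular solution is indeed returned within a constant factor of its fair share $m/M^k$, then by the arithmetic above a pass returns a near-uniformly random block-shape modular solution, and since at least one of the $\approx\binom{n}{\ell}/m$ such solutions lifts to $\Z$, the pass succeeds with the required probability. Securing that near-uniformity is the crux; the splitting-system reduction, the list construction, and the time/space bookkeeping are otherwise routine.
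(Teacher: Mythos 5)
Your overall architecture matches the paper's: a random $k$-division (Proposition \ref{rand_splitting}) costing a tolerable $\ell^{(k-1)/2}$ factor, lists of $m^{1/(\log k+1)}$ random weight-$\ell/k$ subsets reduced mod $m$ (made near-uniform by the leftover-hash-lemma result of \cite{ImpNao96}, which you should cite rather than assert only marginal uniformity of each partial sum), Wagner's $k$-set birthday algorithm as a modular oracle, and a loop of expected length $\Ot({n \choose \ell}/m)$ until the modular solution lifts to $\Z$; your time/space bookkeeping and the tradeoff-curve computation are the same as the paper's. But the step you yourself flag as the crux --- that a single pass returns the designated solution $Y$ with probability within a constant of its fair share --- is a genuine gap, and the mechanism you propose to close it (averaging over the randomness of the $a_i$ plus a second-moment bound) does not work. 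Once the instance is fixed, the partial $2^i$-sums of $Y$'s blocks are deterministic; the tree-merge only retains tuples whose $2^i$-sums fall in the shrinking intervals $I_i$, so if, say, $s_1+s_2$ for $Y$ sits near $m/3$ it is never in $I_1$ and $Y$ is \emph{never} output, no matter how many times you resample the lists. Averaging over the $a_i$ cannot rescue this: a constant fraction of instances would have their unique integer solution permanently invisible to the oracle, which is fatal for an expected-time bound.

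The paper's fix is the content of Section \ref{subsect:random}: before running Algorithm \ref{mod_oracle} one perturbs the lists by a randomizing set $R$ of $\frac{3k}{4}-1$ uniform shifts arranged so that every $2^i$-sum of a candidate solution is translated by an independent uniform offset while the total sum is unchanged. Lemmas \ref{general_sums} and \ref{4_sums} then count, for an arbitrary fixed modular solution $s$, the number of randomizing sets making $s$ ``viable'' (all its shifted $2^i$-sums landing in $I_i$), and Theorem \ref{oracle_prob} concludes that this count is the same for all $s$ up to a factor $(1-2p)^{3k/4}$ --- this, not a Chebyshev cushion on list sizes, is where the hypothesis $\log m \ge 2(\log k)^2$ is spent, since it forces $p \le \frac{1}{3k}$ and hence $(1-2p)^{3k/4}\ge \frac12$. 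Your proof needs either this explicit list randomization and counting argument or a genuinely different uniformization device; as written, the near-uniformity claim on which your loop-count rests is unsupported and your proposed route to it fails.
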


Note that the assumption $m < {n/k \choose \ell/k}$ implies that the modular information 
density is greater than $k$.  Also note that random fixed weight subset sum problems require 
information density greater than one to ensure a solution exists with high probability.
This makes Theorem \ref{thm:kset} a counterpoint
to the lattice reduction technique employed in \cite{NS05}.  Finally note that 
the hidden polynomial terms include $\Theta(\ell^{\frac{1+k}{2}})$, 
 the expected cost of finding a good $k$-division 
 (See Proposition \ref{rand_splitting}).  This limits the applicability of 
Theorem \ref{thm:kset} to practical settings.

The key ingredient of the first theorem is the general decomposition algorithm 
of Schroeppel-Shamir \cite{SchSh81}, while the second theorem relies on the 
$k$-set birthday algorithm of Wagner \cite{Wagner02}.  The application of 
these algorithms to the fixed weight setting relies on splitting systems to perform
the necessary decomposition.
Another candidate for the $k$-division algorithm is the generalization of 
Schroeppel-Shamir outlined in \cite{Vys87} (but see \cite{Fer93} for a rebuttal).

The general idea behind the algorithm of Theorem \ref{thm:kset} is the following.
We pick a parameter $m$ so that the corresponding modular problem has 
high enough modular density for the $k$-set birthday algorithm to be successful.  Noting
that the sought for integer solution is included in the set of solutions to the modular 
problem, we construct a modular oracle which outputs one of the modular solutions
(nearly) uniformly at random.  By repeating the modular oracle enough times, we expect
to eventually find a solution to the original problem over the integers.  The choice
of $m$ determines the point on the time-space tradeoff curve, with larger choices 
being better in the sense that $T$ is smaller.

The importance of this new work is in improving the space complexity of the fixed 
weight subset sum problem.  Theorem \ref{thm:SS} is a direct improvement of the 
work given in \cite{Stin02}, while Theorem \ref{thm:kset} is the first to give a 
time/space tradeoff curve better than $T \cdot S^2$ for this problem.  Although the 
time bound for the algorithm of Theorem \ref{thm:kset} will nearly always be 
worse than $\Ot({n/2 \choose \ell/2})$ due to the limitations on the choice of $m$, the 
algorithm is highly parallelizable by simply running the modular oracle on 
several processors at once.  Thus with enough processors each will have less 
than $\Ot({n/2 \choose \ell/2})$ work to do.  An interesting open problem is to 
generalize the work in \cite{MinAlis09} to the subset sum problem, and then to 
explore the improvements to Theorem \ref{thm:kset} that result from loosening 
the upper bound on $m$.

In Section \ref{sec:splitting} we present the notion of an $(n,\ell,k)$-splitting system
and prove that they exist assuming $k$ divides $n$ and $\ell$.  
We prove Theorem \ref{thm:SS} in Section \ref{sec:SS},
develop the modular oracle in Section \ref{sec:mod_oracle}, and prove 
Theorem \ref{thm:kset} in Section \ref{sec:kset}.
We discuss application to message attacks on knapsack cryptosystems 
in Section \ref{sec:knapsack}, 
experimentally seek the optimal choice of $m$ in Section \ref{sec:data}, and 
finish by proving $(n, \ell, k)$-splitting systems exist in general in Section \ref{sec:gen_splitting}.

% section developting splitting systems
\section{Splitting Systems}\label{sec:splitting}

Recall the definition of $k$-set splitting system given in the previous section, and that 
for now we assume both $n$ and $\ell$ are divisible by $k$.
In \cite{Stin02} it is proved that the probability of a random $2$-division being good 
is $\Omega(\ell^{-1/2})$ and that there is a trivial construction that yields a $2$-set 
splitting system with $n$ divisions.  In this section we generalize these results 
for $k$-set splitting systems.  Note that design theory may yield a 
construction of a $k$-set splitting system with fewer divisions, as Stinson showed 
that a $2$-set splitting system exists with at most $\ell^{3/2}$ divisions in \cite{Stin02}.

The first result 
is a polynomial bound on the probability of choosing a good $k$-division randomly.  
One important note is that the 
constant depends exponentially on $k$, so it is important that $k$ be a fixed parameter.

% randomized splitting
\begin{proposition} \label{rand_splitting}
The probability of choosing a good $k$ division is bounded below by a constant times
$\ell^{\frac{1-k}{2}}$.
\end{proposition}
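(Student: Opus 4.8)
The plan is to compute the probability directly from the uniform distribution over divisions. A random $k$-division is obtained by partitioning the $n$ indices into $k$ labeled blocks $I_1,\dots,I_k$ each of size $n/k$; equivalently one samples a uniformly random such ordered partition. Fix the (unknown) solution set $Y$ with $|Y|=\ell$. The division is good precisely when $|Y\cap I_j|=\ell/k$ for every $j$. Since the $n/k$ blocks are exchangeable, the number of good divisions over the total number of divisions is a ratio of multinomial coefficients:
\begin{equation}
\Pr[\text{good}] \;=\; \frac{\binom{\ell}{\ell/k,\dots,\ell/k}\,\binom{n-\ell}{(n-\ell)/k,\dots,(n-\ell)/k}}{\binom{n}{n/k,\dots,n/k}} \enspace .
\end{equation}
The numerator counts the ways to distribute the $\ell$ elements of $Y$ evenly among the $k$ blocks and then distribute the $n-\ell$ non-solution elements evenly; the denominator counts all ways to form the ordered partition.

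Next I would estimate this ratio using Stirling's approximation. Each multinomial coefficient $\binom{N}{N/k,\dots,N/k}$ behaves like $k^{N}\cdot c_k \cdot N^{-(k-1)/2}$ for a constant $c_k$ depending only on $k$ (this is the standard "local central limit" scale factor: a multinomial coefficient with $k$ equal parts is $k^N$ times a $\Theta(N^{-(k-1)/2})$ correction). Applying this to all three coefficients, the $k^{\ell}$, $k^{n-\ell}$, and $k^{n}$ factors cancel exactly, leaving
\begin{equation}
\Pr[\text{good}] \;=\; \Theta_k\!\left( \frac{\ell^{-(k-1)/2}\,(n-\ell)^{-(k-1)/2}}{n^{-(k-1)/2}} \right) \;=\; \Theta_k\!\left( \left(\frac{n}{\ell(n-\ell)}\right)^{(k-1)/2} \right) \enspace .
\end{equation}
Since $n/(n-\ell) \geq 1$, this is bounded below by a constant (depending on $k$) times $\ell^{-(k-1)/2} = \ell^{(1-k)/2}$, which is the claim. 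The constant does degrade exponentially in $k$, consistent with the remark preceding the proposition, because the Stirling constant $c_k$ and the ratios of the three such constants accumulate a factor exponential in $k$.

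The main obstacle is making the Stirling estimate rigorous and uniform enough to extract a clean lower bound with an explicit (or at least controlled) dependence on $k$, rather than just an asymptotic statement. One has to be careful that $\ell/k$ and $(n-\ell)/k$ are genuine integers (guaranteed here by the divisibility assumption, and by $\ell < n$ so the non-solution part is also nonempty in each block when $n-\ell \geq k$; the small-$\ell$ or $\ell$-near-$n$ edge cases should be handled separately or absorbed into the constant). A cleaner alternative to raw Stirling is an inductive argument: condition on the placement of $Y$ into $I_1$ versus $I_2\cup\dots\cup I_k$ first — the probability that exactly $\ell/k$ solution elements land in $I_1$ is a single hypergeometric term of size $\Theta(\ell^{-1/2})$ by Stinson's $k=2$ bound applied appropriately — and then recurse on the remaining $(k-1)$-way split of the other $\ell - \ell/k$ solution elements among $n - n/k$ indices. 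This yields $\Pr[\text{good}] \geq \prod_{j=1}^{k-1} \Omega\big((\text{something})^{-1/2}\big)$, i.e. a product of $k-1$ factors each of order $\ell^{-1/2}$, giving the $\ell^{(1-k)/2}$ bound with the exponential-in-$k$ constant appearing naturally as the product of the $k-1$ hidden constants.
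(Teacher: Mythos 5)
Your proposal is correct and takes essentially the same route as the paper: both write the probability of a good division as the ratio $\binom{\ell}{\ell/k,\dots,\ell/k}\binom{n-\ell}{(n-\ell)/k,\dots,(n-\ell)/k}\big/\binom{n}{n/k,\dots,n/k}$ (the paper phrases it with unordered divisions and an extra $1/k!$ that cancels in spirit) and then apply Stirling's formula to each multinomial, cancelling the $k^{\ell}k^{n-\ell}/k^{n}$ factors and bounding $\left(n/(\ell(n-\ell))\right)^{(k-1)/2}$ below by $\ell^{(1-k)/2}$ times a $k$-dependent constant. The inductive hypergeometric variant you mention at the end is a reasonable alternative but is not what the paper does.
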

\begin{proof}
First consider the number of ways of choosing $k$ sets of $n/k$ items from 
a total of $n$ items.  It is
$$
\frac{1}{k!} {n \choose n/k} {n-n/k \choose n/k} \cdots {n-(k-2)n/k \choose n/k}
= \frac{1}{k!} \frac{n!}{(\frac{n}{k}!)^k} 
$$
where the extra $\frac{1}{k!}$ term offsets the double counting that results
from the $k$ sets being indistinguishable.

This is also the number of $k$ divisions.  The number of good $k$ divisions
is counted by choosing $k$ equal sized sets from $Y$ and choosing $k$ equal 
sized sets from $X \setminus Y$.  Thus
the probability of choosing a good $k$-division is 
\begin{equation}
\frac{1}{k!} \cdot 
\frac{\ell !}{ ( \frac{\ell}{k} !)^k } \frac{(n-\ell)!}{(\frac{n-\ell}{k}! )^k} 
\left/ \frac{n!}{( \frac{n}{k}! )^k} \right.
\enspace .     \label{k_prob}
\end{equation}

We next find upper and lower bounds on $(n!)/(\frac{n}{k}!)^k$. 
Stirling's formula gives us
$$
2n^ne^{-n} \sqrt{2 \pi n} \geq n! \geq n^n e^{-n} \sqrt{2 \pi n} \enspace .
$$
For the lower bound this implies
$$
\frac{n!}{(\frac{n}{k}!)^k} \leq 
\frac{2 n^n e^{-n} \sqrt{2 \pi n}}{( (\frac{n}{k})^{n/k} e^{-n/k} \sqrt{\frac{2}{k} \pi n})^k} =
k^n \cdot 2 k^{k/2} (2 \pi n)^{\frac{1-k}{2} }
$$
while similarly for the upper bound we have
$$
\frac{n!}{(\frac{n}{k}!)^k} \geq 
\frac{n^n e^{-n} \sqrt{2 \pi n}}{( 2(\frac{n}{k})^{n/k} e^{-n/k} \sqrt{\frac{2}{k} \pi n})^k} =
k^n \cdot 2^{-k} k^{k/2} (2 \pi n)^{\frac{1-k}{2} } \enspace .
$$

Thus (\ref{k_prob}) has a lower bound given by
$$
\frac{1}{k!} \cdot \frac{k^{\ell} \cdot 2^{-k}k^{k/2} (2 \pi \ell)^{\frac{1-k}{2}} \cdot
	 k^{n-\ell} \cdot 2^{-k} k^{k/2} (2 \pi (n-\ell))^{\frac{1-k}{2}} }
{k^n \cdot 2 k^{k/2} (2 \pi n)^{\frac{1-k}{2}} }
\geq c \cdot \ell^{\frac{1-k}{2}} 
$$
for some constant $c$ that does not depend on $\ell$ or $n$, 
but does depend exponentially on $k$.
\end{proof}

Next we construct a $k$-set splitting system with fewer than $n^{k-1}$ divisions, 
showing that a good division can be found deterministically in fewer 
than $n^{k-1}$ trials.  This requires 
first proving that one of the sets 
$B_i = \{i+j \mod{n} \ | \ 0 \leq j \leq n/k \}$
satisfies $|B_i \cap Y| = \ell/k$.

%helper lemma
\begin{proposition}\label{find_one}
Let $Y$ be a subset of $\{0, \dots, n-1\}$ of size $\ell$.  Then there exists $B_i$ such that
$|B_i \cap Y| = \ell/k$.
\end{proposition}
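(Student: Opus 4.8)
The plan is to use an averaging (pigeonhole) argument over a cyclic shift. The key observation is that the sets $B_i = \{i+j \bmod n \mid 0 \le j \le n/k\}$ — wait, I should be careful: as written $B_i$ has $n/k + 1$ elements, but the intended object is almost certainly a block of size $n/k$ of $k$ disjoint consecutive arcs; let me instead work with the quantity $f(i) = |B_i \cap Y|$ where $B_i = \{i, i+1, \dots, i + n/k - 1 \bmod n\}$, a window of $n/k$ consecutive residues. First I would compute $\sum_{i=0}^{n-1} f(i)$ by switching the order of summation: each element $y \in Y$ lies in exactly $n/k$ of the windows $B_i$ (namely those $i$ with $y - n/k + 1 \le i \le y$), so $\sum_{i=0}^{n-1} f(i) = |Y| \cdot (n/k) = \ell n / k$. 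Hence the average value of $f$ over the $n$ shifts is exactly $\ell/k$.

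Since the average of the integer-valued function $f$ equals the integer $\ell/k$, one cannot immediately conclude that some $f(i)$ equals $\ell/k$ — the average being an integer does not force a value to hit it. The fix is the standard \emph{discrete intermediate value} idea: consecutive windows differ by at most one. Specifically, $B_{i+1}$ is obtained from $B_i$ by deleting the element $i$ and inserting $i + n/k$, so $f(i+1) - f(i) \in \{-1, 0, +1\}$. A function on the cyclic group $\Z/n\Z$ that changes by at most $1$ at each step and whose average is $\ell/k$ must take the value $\ell/k$ somewhere: otherwise $f$ is always $> \ell/k$ on some arc and always $< \ell/k$ on another (both nonempty, since the average is strictly between if $f$ is non-constant, and if $f$ is constant it equals its average $\ell/k$), and passing from the region where $f \ge \ell/k+1$ to the region where $f \le \ell/k - 1$ along the cycle forces a step of size at least $2$, a contradiction. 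Therefore some $i$ has $f(i) = |B_i \cap Y| = \ell/k$.

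The main obstacle I anticipate is purely bookkeeping: pinning down the exact size convention for $B_i$ (the displayed definition with $0 \le j \le n/k$ gives $n/k+1$ elements, which is off by one from what makes the counting work, so I would state the windows as having exactly $n/k$ consecutive indices) and handling the cyclic wrap-around cleanly in the "each $y$ is covered $n/k$ times" count. Once the averaging identity $\sum_i f(i) = \ell n/k$ and the Lipschitz-type property $|f(i+1) - f(i)| \le 1$ are in hand, the discrete IVT closes the argument in one line. I do not expect to need anything beyond these two elementary facts; in particular no probabilistic input from Proposition~\ref{rand_splitting} is required here, since this is the deterministic building block used to bound the number of divisions by $n^{k-1}$.
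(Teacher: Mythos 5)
Your proof is correct and is essentially the paper's argument: both rest on the Lipschitz property $|f(i+1)-f(i)|\le 1$ of the sliding window followed by a discrete intermediate value step, and you rightly flag the off-by-one in the stated definition of $B_i$ (the window should have exactly $n/k$ elements). The only difference is in how the IVT is anchored: you average over all $n$ cyclic shifts (each $y\in Y$ is covered $n/k$ times, so the mean of $f$ is $\ell/k$), whereas the paper averages only over the $k$ disjoint windows $B_0, B_{n/k},\dots,B_{(k-1)n/k}$, which partition the index set and hence have intersection sizes summing to $\ell$ --- both yield indices with $f$ on each side of $\ell/k$ unless some $f(i)$ already equals it.
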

\begin{proof}
First note that $B_0, B_{n/k}, B_{2n/k}, \dots, B_{(k-1)n/k}$ partition the set of $n$ indices.
Now if $|B_0 \cap Y| = \ell/k$ we are done, so instead suppose 
(without loss of generality) that $|B_0 \cap Y| > \ell/k$.  
Then since we have a partition above, one of
the $B_i$ for $i = 0, \frac{n}{k}, \frac{2n}{k}, \dots, \frac{(k-1)n}{k}$ 
must have the property that 
$|B_i \cap Y| < \ell/k$.  Call this set $B_j$.

Define a function $v$ by 
$v(i) = |B_i \cap Y| -\ell/k$ and note that $| v(i) - v(i+1) | \leq 1$.  Since $v(0) > 0$ and 
$v(j) < 0$, there must be some $i$ with $v(i) = 0$.  This completes the proof.
\end{proof}

The construction now follows by finding each $I$ in turn.

% deterministic splitting
\begin{proposition}\label{det_splitting}
There exists a $k$-set splitting system with fewer than $n^{k-1}$ divisions.
\end{proposition}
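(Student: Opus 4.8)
The plan is to build the divisions greedily, one block $I_j$ at a time, using Proposition~\ref{find_one} to control the intersection with $Y$ at each stage. Concretely, I would iterate a ``cyclic window'' construction: having already fixed disjoint blocks $I_1,\dots,I_{j-1}$ (each of size $n/k$ and each meeting $Y$ in exactly $\ell/k$ indices), let $X' = X \setminus (I_1 \cup \dots \cup I_{j-1})$, which has size $n - (j-1)n/k$, and let $Y' = Y \cap X'$, which has size $\ell - (j-1)\ell/k$. Relabel $X'$ as $\{0,1,\dots,|X'|-1\}$ via some fixed ordering and apply Proposition~\ref{find_one} (with the parameters $n \mapsto |X'|$, $\ell \mapsto |Y'|$, $k \mapsto k-j+1$, so that the target block size $|X'|/(k-j+1) = n/k$ and the target intersection $|Y'|/(k-j+1) = \ell/k$ are exactly what we want) to locate an index $i$ with $|B_i \cap Y'| = \ell/k$; set $I_j = B_i$ (translated back to the original labels). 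After $k-1$ steps, $I_k$ is forced to be the remaining indices, and it automatically has size $n/k$ and meets $Y$ in $\ell/k$ indices by subtraction.

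The counting is then a product over the $k-1$ free choices. At step $j$ Proposition~\ref{find_one} gives an index $i$ among $0,1,\dots,|X'|-1$, so there are at most $|X'| = n - (j-1)n/k \le n$ possibilities; but the division must be prepared in advance to be independent of $Y$, so $\mathcal{D}$ must contain one division for \emph{every} sequence of choices $(i_1,\dots,i_{k-1})$. That gives at most $n^{k-1}$ divisions. To get the strict inequality $n^{k-1}$ rather than $\le n^{k-1}$ (equivalently $< n^{k-1}$), I would observe that the window at step $j$ actually ranges over only $|X'| = (k-j+1)n/k < n$ values for every $j \ge 1$ except possibly the very first — and even at $j=1$ the window $B_i$ for $i = 0,\dots,n-1$ contains repetitions when translated cyclically, or more cleanly: the product $\prod_{j=1}^{k-1}\bigl((k-j+1)n/k\bigr) = (n/k)^{k-1}\cdot k!/1! = n^{k-1}(k!/k^{k-1}) $, and since $k!/k^{k-1} \le 1$ for $k \ge 2$ with equality only at $k=2$, this is $\le n^{k-1}$; handling $k=2$ separately (where Stinson's $n$-division bound is already known, and $n < n^{1}$ fails, so one uses that the two windows $B_i$ and its complement coincide, halving the count) gives the strict bound. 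So the size of $\mathcal D$ is genuinely below $n^{k-1}$.

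The main obstacle is making precise that this greedy procedure yields a \emph{bona fide} splitting system, i.e.\ that the set $\mathcal{D}$ of divisions can be written down without knowledge of $Y$. The resolution is that we index $\mathcal{D}$ by all tuples $(i_1,\dots,i_{k-1})$ of window-offsets, build the corresponding nested sequence of blocks purely combinatorially, and discard any tuple that fails to produce $k$ disjoint blocks of size $n/k$ (this can only happen if a chosen window overlaps a previous block, which is a $Y$-independent condition). Proposition~\ref{find_one}, applied at each stage, then guarantees that \emph{some} surviving tuple realizes a good division for the given $Y$ — that is the only place $Y$ enters, and it enters only in the existence claim, not in the construction of $\mathcal{D}$. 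I would also need to double-check the divisibility bookkeeping: at step $j$ we need $(k-j+1) \mid |X'|$ and $(k-j+1)\mid |Y'|$, which holds because $|X'| = (k-j+1)n/k$ and $|Y'| = (k-j+1)\ell/k$ by the induction hypothesis, using $k \mid n$ and $k \mid \ell$. With these pieces in place the proof is a short induction plus the product bound.
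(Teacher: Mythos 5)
Your proposal is correct and follows essentially the same route as the paper: iterate Proposition~\ref{find_one} on the shrinking ground set $X' $ to pick $I_1,\dots,I_{k-1}$ in turn, leave $I_k$ as the remainder, and bound $|\mathcal{D}|$ by the product $n(n-\tfrac{n}{k})\cdots(n-\tfrac{(k-2)n}{k})$. Your extra care about $Y$-independence of $\mathcal{D}$ and about the $k=2$ edge case (where the product equals $n^{k-1}$ exactly) is a small refinement the paper glosses over, but the construction and the counting are the same.
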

\begin{proof}
By Proposition \ref{find_one}, there exists a $B_i$ such that $| B_i \cap Y | = \ell/k$.
Call it $I_1$, and reorder the $a_i$ so that the indices in $I_1$ are the last $n/k$ indices.

Redefine the $B_i$ so that they still have size $n/k$, but now wrap modulo $n - n/k$ rather 
than $n$.  Proposition \ref{find_one} is still valid, and so there exists a $B_i \subseteq
X \setminus I_1$ such that $| B_i \cap Y | = \ell/k$.  Call it $I_2$, and reorder the $a_i$ so 
that the indices in $I_2$ are the last $n - n/k$ indices.

By continuing in this fashion, we find a good division.  Only $I_1, \dots, I_{k-1}$
need to be searched for, since $I_k$ consists of the leftover indices.

The number of divisions is the product of the number of $B_i$ searched
for each of $I_1, \dots, I_{k-1}$, which is
$$
n \left(n- \frac{n}{k} \right)\left(n- \frac{2n}{k}\right) 
\cdots \left(n - \frac{(k-2)n}{k}\right) < n^{k-1} \enspace .
$$
\end{proof}

% first application
\section{Applying Schroeppel-Shamir}\label{sec:SS}

Chor and Rivest in  \cite[Section 7.3]{ChRiv88} proposed that the general algorithm 
of Schroeppel and Shamir \cite{SchSh81} may be applicable to the fixed weight subset 
sum problem.  In this section we accomplish this, giving a square root 
time and fourth root space algorithm.  The only missing ingredient 
was the idea of a $4$-set splitting system.  
We will assume for ease of 
exposition  that $n$ and $\ell$ are divisible by $4$.  
See Section \ref{sec:gen_splitting}
for the general case.

We review the theory of problem decomposition presented in \cite{SchSh81}, though we 
specialize to the case of using a good $4$-division to solve the $\ell$-weight subset sum 
problem.  

The fixed weight subset sum problem has length $n$ and weight $\ell$.  By 
Section \ref{sec:splitting} the problem can be decomposed into subproblems
of length $\frac{n}{4}$ and weight $\frac{\ell}{4}$.  As with all 
subset sum problems, this decomposition is sound, complete, and polynomial
(see \cite{SchSh81} for definitions).  However, it is not additive, and thus 
does not satisfy Schroeppel-Shamir's definition of a composition operator.
Fortunately, this lack does not affect the analysis of their algorithm, only the 
expression of the complexity.

In order to apply the Schroeppel-Shamir algorithm, our decomposition must 
have two essential properties.

\begin{definition}
A set of problems is \emph{polynomially enumerable} if there is a polynomial time algorithm
which finds for each bit string $x$ the subset of problems which are solved by $x$.
\end{definition}

\begin{definition}
A composition operator $\oplus$ is \emph{monotonic} if the problems of each size
can be totally ordered in such a way that $\oplus$ behaves monotonically:
if $|P'|=|P''|$ and $P' < P''$ then $P \oplus P' < P \oplus P''$ and 
$P' \oplus P < P'' \oplus P$.
\end{definition}

 Define a problem on set $j$, $1 \leq j \leq 4$ by
$(b, \{a_i \ | \ i \in I_j\})$ where $\vec{x}$ of weight $\ell/4$ is a solution if 
$$
\sum_{i \in I_j} a_i x_i = b \enspace .
$$
Define a composition operator by
$$ P_{j} \oplus P_{j'} = (b+b', \{a_i \ | \ i \in I_j \cup I_{j'} \}) \enspace .$$
This is polynomial and polynomially enumerable since addition is polynomial
time.  It is sound since if 
$\sum_{i \in I_j} a_ix_i = b$ and $\sum_{i \in I_{j'}} a_ix_i = b'$ then 
$\sum_{i \in I_j \cap I_{j'}} a_i x_i = b+b'$.  It is complete by the definition 
of a good division.

Finally, $\oplus$ is monotonic if we order problems by their solution $b$, 
and if this is equal then lexicographically by their
sets $\{a_i \ | \ i \in I_j\}$.
 For suppose that 
$(b', \{a_1', \dots, a_{n/4}'\}) < (b'', \{a_1'', \dots, a_{n/4}''\})$.  Then 
\begin{align*}
&(b'+b, \{a_1', \dots, a_{n/4}', a_1, \dots, a_{n/4}\}) 
< (b''+b, \{a_1'', \dots, a_{n/4}'', a_1, \dots, a_{n/4}\}) \ \mbox{ and} \\
&(b+b', \{a_1, \dots, a_{n/4}, a_1', \dots, a_{n/4}'\}) 
< (b+b'', \{a_1, \dots, a_{n/4}, a_1'', \dots, a_{n/4}''\}) \enspace .
\end{align*}

We now state the main theorem in the context of the $\ell$-weight subset sum problem.

% main theorem of SS
\begin{theorem}[Schroeppel and Shamir \cite{SchSh81}]
If a set of problems is polynomially enumerable and has a monotonic composition
operator, then instances can be solved in time $\Ot({n/4 \choose \ell/4}^2)$ and 
space $\Ot({n/4 \choose \ell/4})$.
\end{theorem}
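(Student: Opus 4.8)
The plan is to recall the Schroeppel--Shamir decomposition tree and show that with a good $4$-division in hand, the generic algorithm of \cite{SchSh81} applies verbatim, yielding the stated bounds. First I would organize the four subproblems $P_1, P_2, P_3, P_4$ (on index sets $I_1, \dots, I_4$) into a balanced binary composition tree: the root is the full problem $P = (t, \{a_1, \dots, a_n\})$, its two children are $P_{12} = P_1 \oplus P_2$ and $P_{34} = P_3 \oplus P_4$, and the leaves are the $P_j$. A solution to $P$ of weight $\ell$ corresponds, via the good division, to a choice of weight-$\ell/4$ solution on each $I_j$ whose target values sum to $t$; completeness of $\oplus$ (already checked above) guarantees the solution is visible in the tree. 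Since each $P_j$ has $\binom{n/4}{\ell/4}$ candidate solutions, each internal node has at most $\binom{n/4}{\ell/4}^2$ composed solutions, and the root has at most $\binom{n/4}{\ell/4}^4 \approx \binom{n}{\ell}$.

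Next I would invoke the Schroeppel--Shamir priority-queue search on this tree. The key mechanism is that, because $\oplus$ is monotonic (verified above: order subproblem solutions by target $b$, breaking ties lexicographically on the coefficient list), the composed solutions at an internal node can be enumerated \emph{in sorted order} on demand using a heap of size $O(\sqrt{\text{node size}})$, fed lazily by the two children enumerated in sorted order. Concretely, to find solutions at the root with target exactly $t$, one enumerates $P_{12}$-solutions in increasing order of target and $P_{34}$-solutions in decreasing order, advancing whichever pointer is needed; each child enumeration is itself a heap-driven merge of its two children. Polynomial enumerability of the leaf problems (checked above, since listing weight-$\ell/4$ bit vectors on $n/4$ indices and evaluating the sum is polynomial) ensures the leaves can be produced in sorted order within the claimed overhead. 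Walking through this, the active memory is one heap per level, each of size $\Ot\!\big(\binom{n/4}{\ell/4}\big)$, and the total work is $\Ot\!\big(\binom{n/4}{\ell/4}^2\big)$ heap operations, each costing a polynomial-in-$n$ (hence soft-oh-absorbed) factor.

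The step I expect to be the main obstacle is not the algorithmic skeleton, which is entirely \cite{SchSh81}, but rather checking that the \emph{non-additivity} of our $\oplus$ genuinely does no harm. The concern is that Schroeppel--Shamir phrase their result for composition operators that commute with an additive target structure, whereas ours composes problems by taking unions of disjoint index sets and adding targets --- sound and complete but not literally ``additive'' in their sense. I would address this by pointing out that their correctness and complexity analysis uses only (i) polynomial enumerability of the leaves, (ii) monotonicity of $\oplus$ so that sorted enumeration propagates up the tree, and (iii) a polynomial bound on composition cost --- all of which we have established --- and that additivity is used by them merely to phrase the complexity in terms of a single size parameter. Here the four leaf problems all have size $\binom{n/4}{\ell/4}$, so the same bookkeeping yields time $\Ot\!\big(\binom{n/4}{\ell/4}^2\big)$ and space $\Ot\!\big(\binom{n/4}{\ell/4}\big)$, completing the argument.
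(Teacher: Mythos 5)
Your proposal is correct and follows essentially the same route as the paper, which likewise defers the algorithmic details to \cite{SchSh81} and only summarizes the two sorted priority queues generating $P_1 \oplus P_2$ in increasing order and $P_3 \oplus P_4$ in decreasing order, with queue size bounded by ${n/4 \choose \ell/4}$ and at most ${n/4 \choose \ell/4}^2$ steps. Your observation that non-additivity of $\oplus$ affects only the expression of the complexity, not the correctness or the queue-based analysis, is exactly the point the paper makes as well.
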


The algorithm is summarized as follows.  
Let $P$ be a problem of length $n$ and weight $\ell$ for which we seek a solution, 
and assume we are given a good division.  For $I_1, I_2, I_3, I_4$ enumerate
all subproblems and store in tables $T_j$, $1 \leq j \leq 4$.

Sort $T_2$ in increasing order and sort 
$T_4$ in decreasing order.  Make two queues (with arbitrary polynomial time insertions
and deletions), with the first containing pairs 
$(P_1, \mbox{smallest } P_2)$ for all $P_1 \in T_1$ and the other containing
pairs $(P_3, \mbox{largest } P_4)$ for all $P_4 \in T_4$.  Now repeat the following
until either a solution is found or both queues are empty 
(in which case there is no solution):
compute $S = (P_1 \oplus P_2)\oplus (P_3 \oplus P_4)$ and output $S$ 
if $S = P$.
If $S < P$ delete $(P_1, P_2)$ from the first queue and 
add $(P_1, P_2')$ where $P_2'$ is the successor 
of $P_2$.  If $S > P$ delete $(P_3, P_4)$ from the second queue 
and add $(P_3, P_4')$ where $P_4'$
is the successor of $P_4$.

We conclude that if we have a good $4$-division, the algorithm of Schroeppel and Shamir 
will solve the problem.
By Propositions \ref{rand_splitting} and \ref{det_splitting} we know that a good 
$4$-division can be found in $O(n^3)$ trials deterministically or expected 
$O(\ell^{3/2})$ trials randomly.  This inspires the following algorithm for the 
fixed weight subset sum problem.

\begin{algorithm} 
\caption{Schroeppel-Shamir for fixed weight subset sum}
\label{alg:SS}
\begin{algorithmic}[1]
\STATE {\bf Input: } positive integers $a_1, \dots, a_n$, $t$, $\ell$
\STATE {\bf Output: } $\vec{x} \in \{0,1\}^n$ of weight $\ell$ 
such that $\sum_{i=1}^n a_i x_i = t$
\WHILE{ no solution}
\STATE choose division $D = \{I_1, I_2, I_3, I_4\}$ 
\STATE for $1 \leq j \leq 4$ form table $T_j$ of problems, one for 
each weight $\ell/4$ subset of $I_j$
\STATE apply Schroeppel-Shamir to $T_1, T_2, T_3, T_4$
\ENDWHILE
\end{algorithmic}
\end{algorithm}  

\begin{proof}[Proof of Theorem \ref{thm:SS}]
The correctness follows from the monotonicity of $\oplus$, see \cite{SchSh81}
for details. From \cite{SchSh81}, 
the maximum number of elements in either queue at any one time is 
${n/4 \choose \ell/4}$ and the maximum number of steps needed is the 
number of pairs $(P_i, P_j) = {n/4 \choose \ell/4}^2$.  Thus the space complexity 
of Algorithm \ref{alg:SS} is $\Ot({n/4 \choose \ell/4})$ and the time complexity is 
$\Ot(n^3 {n/4 \choose \ell/4}^2)$ using deterministic splitting 
and $\Ot(\ell^{3/2} {n/4 \choose \ell/4}^2)$ using randomized splitting.
\end{proof}

As this work was inspired by Stinson's paper \cite{Stin02}
 on the fixed weight discrete logarithm problem, 
it is worth noting that Algorithm \ref{alg:SS}
applies directly to that problem as well. 

Also note that given a brute force running time of $O({n \choose \ell})$,
Algorithm \ref{alg:SS} is a square root time and fourth root space algorithm, 
and hence lies on the tradeoff curve $T \cdot S^2 = {n \choose \ell}$.
This is justified by Stirling's formula, which gives 
$$
{n/4 \choose \ell/4} = 
\Theta \left({n \choose \ell}^{1/4} \left( \frac{n}{\ell(n-\ell)} \right)^{3/8} \right) \enspace .
$$

% new section
\section{Modular Oracle}\label{sec:mod_oracle}

Having proved Theorem \ref{thm:SS}, our task in the next two sections is 
to prove Theorem \ref{thm:kset}.  Along with the notion of a $k$-division, 
the new ingredient needed is an oracle that for a given $m$, returns
a random solution of the modular subset sum problem over $\Z/m\Z$.
This oracle will be the multi-set birthday algorithm of Wagner \cite{Wagner02}, 
modified for the subset sum problem by Lyubashevsky \cite{Lyu05}
and proven correct in \cite{Sha08} (with complete proofs in \cite{thesis}).
In this section we present the multi-set birthday algorithm, modified 
to output a modular solution uniformly at random.  In Section \ref{sec:kset}
we demonstrate how this applies to the integer fixed weight subset sum problem
to finish the proof of Theorem \ref{thm:kset}.

Suppose we have lists $L_1, \dots, L_k$ of $N$ elements drawn uniformly and independently
from $\Z/m\Z$ and a target $t$.
The $k$-set birthday problem is to find $s_i \in L_i$ with $\sum s_i = t \mod{m}$.
We can assume without loss of generality that our target is $0$, since if it is
not we can replace $L_k$ with $L_k - t \mod{m}$ 
and the elements will still be uniformly generated from $\Z/m\Z$.  
Use the representation that places elements in the interval $[-\frac{m}{2}, \frac{m}{2})$.

We will now briefly describe the original $k$-set algorithm from \cite{Wagner02}. 
Assume that $k$ is a power of $2$, and define parameter
$p = m^{-1/(\log{k}+1)}$.   Let $I_0$ denote the interval $[-\frac{m}{2}, \frac{m}{2})$ 
and in general
let $I_{\lam}$ denote the interval $[-\frac{mp^{\lam}}{2}, \frac{mp^{\lam}}{2})$.  Denote
by $\bowtie_I$ the list merging operator, so that $L_1 \bowtie_I L_2$ is the 
set of elements $a+b \in I$ where $a \in L_1$, $b \in L_2$ and addition is in $\Z$.  
Let $\bowtie$ be the matching 
operator, so that $L_1 \bowtie L_2$ outputs pairs $(a,b)$ with 
$a \in L_1$, $b \in L_2$
such that $a + b = 0$ \ (over $\Z$).

These operators are instantiated as follows.  For $\bowtie_I$, start by sorting $L_1$ and $L_2$.
For each $a \in L_1$, search for $b$ from $L_2$ that fall in the interval $I - a$
 and place all such $a+b$ in the output list.  Note that 
if $L_1$ and $L_2$ have size $N$, then the complexity of this operator is $O(N \log{N})$ time 
and space.
For $\bowtie$, sort $L_1$ and apply a random permutation to $L_2$.  Then for each $b \in L_2$, 
search for $-b$ in $L_1$.  The complexity is again $O(N \log{N})$
time and space.

The $k$-set birthday algorithm proceeds as follows.  For level 
$\lambda$, $1 \leq \lam \leq \log{k}-1$, we denote lists by $L^{(\lam)}$ and 
apply the operator $\bowtie_{I_{\lam}}$ to pairs of lists.  At level $\log{k}$ we apply 
$\bowtie$ to the remaining pair of lists, and every
element of $L_1^{(\log{k})} \bowtie L_2^{(\log{k})}$ is a solution to the problem.
Here we deviate from Wagner slightly and have the algorithm 
output a random element from the result of $\bowtie$ to ensure that the output
is a random modulo $m$ solution.  Pseudocode for this algorithm appears as 
Algorithm \ref{mod_oracle}.

\begin{algorithm}
\caption{Modular $k$-set Oracle}
\label{mod_oracle}
\begin{algorithmic}[1]
\STATE {\bf Input:} Lists $L_1, \dots, L_k$ of size $N$, modulus $m$, target $t$
\STATE {\bf Output:} $s_1, \dots, s_k$ with $s_i \in L_i$ such that $s_1+\cdots+s_k-t = 0 \mod{m}$.
\STATE Set $p = m^{-1/(\log{k}+1)}$, ensure that $N > 1/p$
\STATE For all list elements use representation in $[-\frac{m}{2}, \frac{m}{2})$
\FOR{ level $\lam = 1$ to $\log{k}-1$ }
\STATE apply $\bowtie_{I_{\lam}}$ to pairs of lists
\ENDFOR
\STATE apply $\bowtie$ to the final pair of lists $(L_1^{(\log{k})}, L_2^{(\log{k})})$
\STATE output an element of $L_1^{(\log{k})} \bowtie L_2^{(\log{k})}$ at random
\end{algorithmic}
\end{algorithm}

We assume that with $N = 1/p$, the size of $L_1^{(\lam)} \bowtie_{I_{\lam}} L_2^{(\lam)}$ 
is again a list of size $1/p$ for $1 \leq \lam \leq \log{k}-1$.
In \cite{Sha08} it is proven that list elements at all levels are close to uniform.  
Furthermore, if we assume the initial lists have size $\alpha/p$ and modify the listmerge
operator so that for each $a \in L_1$, exactly one $b$ from $L_2$ is chosen so that 
$a+b \in I$, then $L_1^{(\lam)} \bowtie_{I_{\lam}} L_2^{(\lam)}$ again has $\alpha/p$ 
elements (with exponentially small failure probability).  
Here $\alpha$ is a parameter chosen that depends on the requested chance of 
failure; for our purposes it suffices to know it is bounded by a polynomial in $n$.

Now, our stated implementation of the listmerge operator keeps all sums $a+b \in I$ 
because we want all solutions to have a chance at being found.  Since having more elements
at each level only increases the probability of the $k$-set algorithm succeeding, we 
have a rigorously analyzed algorithm if we accept an additional complexity factor of 
$\alpha^{O(1)} = n^{O(1)}$.

With the two lists at level $\log{k}$ each having size $\alpha/p$ and containing (almost) uniform
elements in the interval $[-\frac{m^{2/(\log{k}+1)}}{2}, \frac{m^{2/(\log{k}+1)}}{2})$,  
we conclude by the work in \cite{NishSib88} 
that $L_1^{(\log{k})} \bowtie L_2^{(\log{k})}$ contains at least 
one element with positive probability,  and thus that 
Algorithm \ref{mod_oracle} outputs a solution with positive probability.
The complexity of the algorithm is the complexity of running $\bowtie_I$ a 
total of $2k$ times, for a total of $\Ot(m^{1/(\log{k}+1)})$ time and space.

% new section
\subsection{Randomizing the Modular Oracle}\label{subsect:random}

Note that not every solution to the modular subset sum problem could be output
by Algorithm \ref{mod_oracle}.  Inspired by a suggestion from \cite{Wagner02}, 
our focus for the rest of this section
will be on using Algorithm \ref{mod_oracle} to generate a random solution to the $k$-set 
birthday problem, one which has a nearly uniform distribution.

Define the $2$-sums of the problem to be 
$L_1+L_2, L_3+L_4, \dots, L_{k-1}+L_k$, the $4$-sums to be 
$L_{4i+1} + L_{4i+2}+L_{4i+3}+L_{4i+4}$ for $0 \leq i \leq \frac{k-4}{4}$, 
and so on up to the two $k/2$-sums
$L_1+\cdots+L_{k/2}$ and $L_{k/2+1}+\cdots+L_k$.  This term will also be 
used for the corresponding sums of a particular solution $(s_1, \dots, s_k)$.
We refer to both integer sums and modular sums depending on whether 
the addition is over $\Z$ or over $\Z/m\Z$.

Let $R$ be a set of $\frac{3k}{4} - 1$ elements of $\Z/m\Z$ generated uniformly 
at random.  For each of the $4$-sums, replace the lists $L_1$, $L_2$, $L_3$, $L_4$
with $L_1+r_1$, $L_2+r_2$, $L_3-r_1$, $L_4-r_2$ where $r_1$ and $r_2$ 
are two elements of $R$.
For each of the $8$-sums, replace $L_{8i+4}$ with $L_{8i+4}+r$ and $L_{8i+8}$
with $L_{8i+8}- r$.  In general, for each of the $2^j$-sums ($3 \leq j \leq \log{k}$), 
replace $L_{2^j i + 2^{j-1}}$ with $L_{2^j i + 2^{j-1}} + r$ and 
$L_{2^j i + 2^j}$ with $L_{2^j i + 2^j} - r$.
All these operations are in $\Z/m\Z$.

In the example of the $8$-set 
algorithm $R = \{r_1, r_2, r_3, r_4, r_5\}$ and lists $L_1, \dots, L_8$ are replaced by
$$
L_1+r_1, L_2 + r_2,  L_3-r_1,  L_4 - r_2 + r_5, 
 L_5+r_3, L_6 + r_4, L_7-r_3, L_8 -  r_4 - r_5 \enspace .
$$

We seek to prove that applying Algorithm \ref{mod_oracle} to lists modified in this way
results in a solution drawn almost uniformly at random from the space of all solutions to the 
$k$-set birthday problem on fixed lists $L_1, \dots, L_k$.  
To classify which solutions are possibly output we make the following definition.

% viable definition
\begin{definition}
Let a solution $s_1+\cdots+s_k$ modified in the above manner by a randomizing set $R$
be denoted $s_1'+\cdots+s_k'$.
Call a solution to the modulo $m$ subset sum problem \emph{viable} with respect to 
a randomizing set $R$ if for $1 \leq i \leq \log{k}-1$, all integer $2^i$-sums $s'$ satisfy 
$s' \in I_i$.
\end{definition}

We will also refer to an individual integral or modular
 $2^i$-sum $s'$ as viable if $s' \in I_i$.

Algorithm \ref{mod_oracle} performs additions in $\Z$ despite the fact that a 
modular solution is sought.  Our goal is to prove that the number of 
randomizing sets making a solution $s$ viable is roughly equal.  We first 
prove this for modular $2^i$ sums with $i \geq 2$ in Lemma \ref{general_sums} 
starting with the $\frac{k}{2}$-sums and working down. 
The integer $2$-sums are analyzed in Lemma \ref{4_sums}, from which
the main theorem quickly follows.  The key observation is that a modular solution 
with viable modular $2^i$-sums for all $i > 2$ and viable integral $2$-sums
must also have viable integral $2^i$-sums for all $i > 2$.

% first lemma
\begin{lemma}\label{general_sums}
Let $\frac{k}{2^i} > 2$ and consider $s+t$, the sum of two $\frac{k}{2^i}$-sums.
Assuming that $s+t \mod{m} \in I_{\log{k}-i+1}$, the number of $r$
 such that $s+r \mod{m}$
and $t-r \mod{m}$ simultaneously fall in $I_{\log{k}-i}$ is at least
$mp^{\log{k}-i} (1-p) - 1$ and at most $mp^{\log{k}-i}$. 
\end{lemma}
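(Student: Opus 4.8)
The plan is to convert the statement into a count of integer points in the intersection of two short intervals. Write $\mu = \log k - i$, so that $I_\mu = [-mp^\mu/2,\, mp^\mu/2)$ has length $mp^\mu$; note $\mu \ge 2$ by the hypothesis $k/2^i > 2$, and that $p < 1$ forces $I_\mu \subseteq I_0 = [-m/2, m/2)$, a complete set of residues modulo $m$. Let $\sigma \in [-m/2, m/2)$ be the representative of $s+t \bmod m$; by hypothesis $\sigma \in I_{\mu+1}$, so $|\sigma| < mp^{\mu+1}/2$. Because $I_\mu$ lies in a fundamental domain, the residues $r$ with $(s+r)\bmod m \in I_\mu$ are exactly those of the form $(c_1 - s)\bmod m$ with $c_1 \in I_\mu \cap \Z$, and those with $(t-r)\bmod m \in I_\mu$ are exactly the $(t - c_2)\bmod m$ with $c_2 \in I_\mu \cap \Z$. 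Hence $r$ satisfies both conditions iff there are integers $c_1, c_2 \in I_\mu$ with $c_1 + c_2 \equiv s + t \equiv \sigma \pmod m$ and $r \equiv c_1 - s \pmod m$; and $(c_1, c_2) \mapsto (c_1 - s)\bmod m$ is a bijection from such pairs onto the good $r$, since $c_1$ and $c_2$ can be recovered as the representatives of $r + s$ and $t - r$ in $[-m/2, m/2) \supseteq I_\mu$.

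Next I would drop the reduction modulo $m$ in $c_1 + c_2 \equiv \sigma$. Since $c_1, c_2 \in I_\mu$ we have $c_1 + c_2 \in [-mp^\mu, mp^\mu)$, so $c_1 + c_2 - \sigma$ lies strictly in $\bigl(-mp^\mu(1+p/2),\, mp^\mu(1+p/2)\bigr)$. The standing hypothesis $\log m \ge 2(\log k)^2$ gives $p \le 1/2$, and with $\mu \ge 2$ this yields $p^\mu(1 + p/2) < 1$; hence the only multiple of $m$ in that range is $0$, and $c_1 + c_2 = \sigma$ holds over $\Z$. Therefore the number of good $r$ equals the number of integers $c$ with $c \in I_\mu$ and $\sigma - c \in I_\mu$, that is, the number of integers in $I_\mu \cap (\sigma - I_\mu)$.

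Finally I would estimate this count. The intervals $I_\mu$ and $\sigma - I_\mu$ both have length $mp^\mu$, with midpoints $0$ and $\sigma$, and $|\sigma| < mp^{\mu+1}/2 < mp^\mu$, so their intersection is an interval of length $mp^\mu - |\sigma|$, which lies between $mp^\mu(1 - p/2)$ and $mp^\mu$. An interval of length $L$ contains between $L - 1$ and $L + 1$ integers, and carefully accounting for the half-open endpoints of $I_\mu$ and $\sigma - I_\mu$ pins the count to at least $mp^\mu(1 - p/2) - 1 \ge mp^\mu(1 - p) - 1$ and at most $mp^\mu$ (the upper bound being exact when the list size $1/p$, hence $mp^\mu$, is an integer), which are the bounds claimed.

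The step I expect to be the real work is the bijection in the first paragraph: the lemma hinges on correctly matching ``addition over $\Z$'' inside the intervals $I_\lambda$ with ``reduction modulo $m$'' in the definition of a solution, and this is precisely where a stray sign or an off-by-one at an interval endpoint would corrupt the count. Everything else is routine interval arithmetic, but I would write that reduction out in full, double-checking that no residue $r$ arises from two distinct pairs $(c_1, c_2)$ and that the two conditions really ``see'' only the representative in $[-m/2, m/2)$.
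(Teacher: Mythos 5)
Your proof is correct and follows essentially the same route as the paper: both arguments reduce the count of good $r$ to the overlap of two length-$mp^{\log k - i}$ intervals whose relative shift is the centered representative of $s+t \bmod m$, bounded in magnitude by $mp^{\log k - i + 1}/2$. Your write-up is in fact somewhat more careful than the paper's (which only examines the two extreme positions of $s+t$ and asserts they give the max and min), since you make the bijection between good residues and lattice points in the interval intersection explicit and justify dropping the reduction modulo $m$.
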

\begin{proof}
Call an $r$ value good if $s+r \mod{m} \in I_{\log{k}-i}$ and 
$t-r \mod{m} \in I_{\log{k}-i}$.

The maximum number of good $r$ values occurs when $s+t = 0 \mod{m}$.  
The size of $I_{\log{k}-i}$ is $\floor{mp^{\log{k}-i}}$, and so this is the 
number of $r$ such that $s+r \mod{m} \in I_{\log{k}-i}$.  Since 
$s = -t \mod{m}$, the same set of $r$ place $r -t \mod{m} \in I_{\log{k}-i}$, 
and the same set of $r$ place $t-r \mod{m} \in I_{\log{k}-i}$ since the interval
is symmetric.

The minimum occurs when
$s+t = \pm mp^{\log{k}-i+1}$.  The number of $r \in \Z/m\Z$ that place 
$s+r \in I_{\log{k}-i}$ is  $\lfloor mp^{\log{k}-i} \rfloor$.  The same set 
of $r$ values place $r-t + mp^{\log{k}-i+1} \in I_{\log{k}-i}$, but a total of 
$mp^{\log{k}-i+1}$ of the $r$ values are lost when we instead ask for 
$t-r \mod{m} \in I_{\log{k}-i}$.  So the number of valid $r$ values is at least 
$\lfloor mp^{\log{k}-i} -  mp^{\log{k}-i+1} \rfloor \geq
   mp^{\log{k}-i} (1-p) - 1$.
\end{proof}

Suppose that randomizers have been found that place the modular 
$4$-sums of a solution in $I_{2}$.  We now seek to place
the integer $2$-sums in $I_1$.  Since we will be mixing integer addition 
and modular addition, we use $\oplus$ to signify the latter.  Recall that we 
are using $[-\frac{m}{2}, \frac{m}{2})$ as the set of representatives for elements 
of $\Z/m\Z$.

\begin{lemma}\label{4_sums}
Suppose that $s_1+s_2+s_3+s_4 \mod{m}$ is in $I_{2}$.
Then the number of pairs $(r_1, r_2)$ such that 
$$(s_1 \oplus  r_1) + (s_2 \oplus r_2)  \in I_1
\mbox{ and } (s_3 \ominus r_1) + (s_4 \ominus r_2) \in I_1
$$
is at most $m^2p$ and at least $(m - 2mp)(mp - mp^2-1)$.
\end{lemma}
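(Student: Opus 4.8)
\textbf{Proof proposal for Lemma \ref{4_sums}.}

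The plan is to reduce the two-dimensional counting problem over pairs $(r_1, r_2)$ to two independent one-dimensional counts, one for $r_1$ and one for $r_2$, by exploiting the product structure: the condition on $(s_1 \oplus r_1) + (s_2 \oplus r_2)$ and the condition on $(s_3 \ominus r_1) + (s_4 \ominus r_2)$ each couple $r_1$ and $r_2$, but after fixing $r_1$ the admissible $r_2$ range is an interval (intersection of intervals) whose length I can bound uniformly in $r_1$. So first I would fix $r_1$ and ask: for how many $r_2 \in \Z/m\Z$ do both $(s_1 \oplus r_1) + (s_2 \oplus r_2) \in I_1$ and $(s_3 \ominus r_1) + (s_4 \ominus r_2) \in I_1$ hold? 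Writing $u = s_1 \oplus r_1$ (an element of $[-m/2, m/2)$) the first condition forces $s_2 \oplus r_2$ to lie in the interval $I_1 - u$ intersected with $[-m/2,m/2)$, hence restricts $r_2$ to an arithmetic progression of length at most $|I_1| = \floor{mp}$; similarly the second condition restricts $r_2$ to a set of size at most $\floor{mp}$. This already gives the upper bound: at most $\floor{mp} \le mp$ choices of $r_2$ for each of the at most $m$ choices of $r_1$, hence at most $m^2 p$ pairs — though I should double-check whether the naive product $m \cdot mp$ or a sharper $\floor{mp}^2$-type bound is what the statement wants; the stated $m^2 p$ is the loose version and follows immediately.

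For the lower bound I would mimic the structure of the proof of Lemma \ref{general_sums}. The governing hypothesis is $s_1 + s_2 + s_3 + s_4 \equiv c \pmod m$ with $c \in I_2$, i.e. $|c| < mp^2/2$ up to the floor. Fix $r_1$ first. Analogously to Lemma \ref{general_sums}, I want: a ``large'' set of $r_1$ values for which $s_1 \oplus r_1$ and $s_3 \ominus r_1$ are both not too large (so that the subsequent $r_2$-count is not killed by wraparound), and then for each such $r_1$ a ``large'' interval of $r_2$. The key subtlety is that the quantities appearing in $I_1$ are \emph{integer} sums $(s_1 \oplus r_1) + (s_2 \oplus r_2)$, not modular sums, so I must control the representatives in $[-m/2, m/2)$ and the carries. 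The cleanest route: note that for $r_1$ ranging over an interval of length roughly $m - 2mp$ (avoiding a band of width $\sim mp$ near the ``bad'' values where $s_1 \oplus r_1$ or $s_3 \ominus r_1$ sits near $\pm m/2$), the map $r_1 \mapsto s_1 \oplus r_1$ and $r_1 \mapsto s_3 \ominus r_1$ behave like honest translations, and then for each such $r_1$ the count of good $r_2$ is, by the same wraparound-loss argument as in Lemma \ref{general_sums} (with the modular $4$-sum being in $I_2$ playing the role of ``$s+t \in I_{\lambda+1}$''), at least $\lfloor mp - mp^2 \rfloor \ge mp - mp^2 - 1$. Multiplying the two bounds gives $(m - 2mp)(mp - mp^2 - 1)$.

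The main obstacle I anticipate is the bookkeeping around the interplay of integer and modular addition — specifically, verifying that restricting $r_1$ to an interval of length $m - 2mp$ genuinely guarantees that, for the surviving $r_1$, the second-stage $r_2$-count degrades by at most the $mp^2$ wraparound term and not more (there could in principle be two separate bad bands, one for each of $s_1 \oplus r_1$ and $s_3 \ominus r_1$, and I need the hypothesis $c \in I_2$ to align them so that a single band of width $\sim mp$ — hence the factor $2mp$ for both ends — suffices). I would handle this by carefully choosing, as in Lemma \ref{general_sums}, the worst-case configuration $s_1 + s_2 + s_3 + s_4 \equiv \pm mp^2$ (boundary of $I_2$) for the minimum, showing the loss is exactly a length-$mp^2$ chunk of $r_2$ values plus the length-$2mp$ exclusion of $r_1$ values near the representative boundary. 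Everything else is the routine floor/ceiling estimation already used twice in the paper.
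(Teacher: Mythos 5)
Your proposal follows essentially the same route as the paper's proof: fix $r_1$ first, observe that the admissible $r_2$ form the intersection of two translated copies of $I_1$ (giving the $m\cdot mp$ upper bound), then for the lower bound exclude the at most $2mp$ values of $r_1$ for which $s_1\oplus r_1$ or $s_3\ominus r_1$ lands within $\frac{mp}{2}$ of $\pm\frac{m}{2}$ and truncates an interval, and apply the Lemma~\ref{general_sums} wraparound count $mp-mp^2-1$ to the surviving $r_1$. The subtleties you flag (integer versus modular sums, and the single $2mp$ exclusion covering both potentially bad bands) are exactly the ones the paper handles, and in the same way.
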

\begin{proof}
First, consider a fixed $r_1$, and let $s_1' = s_1 \oplus r_1$  and $s_3' = s_3 \ominus r_1$.
Then we need $s_2 \oplus r_2 \in I_1 - s_1'$, where the interval subtraction is over $\Z$.
The size of $I_1 - s_1'$ might be as small as $\frac{mp}{2}$ if $s_1' = \pm \frac{m}{2}$.  
Since we can choose
$r_2$ such that $s_2 \oplus r_2$ is any element in $[-\frac{m}{2}, \frac{m}{2})$, the number 
of such $r_2$ is the size of $I_1 - s_1'$.
Simultaneously $r_2$ must satisfy $s_4 \ominus r_2 \in I_1 - s_3'$.
There are two extremes, depending on whether $s_1+s_2+s_3+s_4 = 0 \mod{m}$ or 
$s_1 + s_2 + s_3 + s_4 = \pm mp^2 \mod{m}$.

In the first case, $s_1' \oplus s_2 = -(s_3' \oplus s_4)$ and $I_1$ symmetric implies that 
there are at most $mp$ values of $r_2$ such that $s_1' \oplus s_2 \oplus r_2, 
s_3' \oplus s_4 \ominus r_2$ are in $I_1$. 
Since switching to $s_3' + s_2 \oplus r_2$ and $s_3' + s_4 \ominus r_2$ can only 
reduce the number of valid $r_2$, $mp$ is an upper bound.

However, if $s_1' \oplus s_2 = mp^2 \ominus (s_3' \oplus s_4)$, 
then by the same argument from Lemma \ref{general_sums} the number 
of valid $r_2$ for the modular sums is $\floor{mp - mp^2}$.  The number 
of valid $r_2$ for the integer sums could be smaller depending on the sizes 
of $I_1 - s_1'$ and $I_1 - s_3'$.

Now consider the size of $I_1 - s_1'$ and $I_1 - s_3'$ depending on $r_1$.  When $r_1$ shifts
by one, the intervals shift by one as well.  The intervals will have less than full size
when $s_1 \oplus r_1$ or $s_3 \ominus r_1$ is less than $-\frac{m}{2} + \frac{mp}{2}$
or greater than $\frac{m}{2} - \frac{mp}{2}$.  Hence the number of $r_1$ that make for one 
of the intervals to have less than full size is at most $2mp$.

Thus the number of valid pairs $(r_1, r_2)$ is at most $m^2p$ (assuming intervals 
full size for all $r_1$ and in case one above) and is at least 
$(m - 2mp)(mp - mp^2 - 1)$ (assuming interval size taken from case two).
\end{proof}

% main theorem of the section
\begin{theorem}\label{oracle_prob}
Assume that $\frac{p}{2} < \frac{1}{k}$.
Let $A$ be the event that a solution $s = s_1+\cdots+s_k$ is output by the modular oracle, given
that some solution is output.  Then the distribution of $A$ is uniform within a factor of 
$(1-2p)^{3k/4}$.
\end{theorem}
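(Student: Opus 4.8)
The plan is to establish the theorem in the form anticipated in the text just before it: that the number $N(s)$ of randomizing sets $R$ with respect to which a fixed modular solution $s$ is viable is the same for every $s$ up to a factor of $(1-2p)^{3k/4}$, and then to note that this transfers to the distribution of $A$, since a solution can be output only when it is viable and, conditioned on some solution being output, the oracle returns an element of the final merged list, so that up to the near-uniformity of the intermediate lists the chance that $s$ is output is proportional to $N(s)$. The first ingredient is the ``key observation'' made precise: if $s$ is a modular solution and the integral $2$-sums of $s$ after randomization by $R$ all lie in $I_1$, then each integral $2^i$-sum is a sum of $2^{i-1}$ elements of $I_1$ and hence has absolute value below $2^{i-1}mp/2<m/2$, because $p/2<1/k$ forces $2^{i-1}p<1$; being congruent mod $m$ to the corresponding modular $2^i$-sum, which also lies in $[-\frac{m}{2},\frac{m}{2})$, it equals that modular sum. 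Hence, for a solution $s$ and any $R$, $s$ is viable with respect to $R$ exactly when the integral $2$-sums lie in $I_1$ and the modular $2^i$-sums lie in $I_i$ for $2\le i\le\log k-1$ (and then the level-$\log k$ application of $\bowtie$ succeeds because the integral $k$-sum equals $0$).

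Next I would organize the $3k/4-1$ randomizers along the binary tree of sums and count $N(s)$ by choosing them from the coarsest level down. The modular $k$-sum is $0\in I_{\log k}$ with no choice. Each randomizer attached to a $2^j$-block with $3\le j\le\log k$ governs the split of a modular $2^j$-sum already known to lie in $I_j$ into its two modular $2^{j-1}$-sub-sums, which it shifts by $+r$ and $-r$; by Lemma \ref{general_sums} the number of admissible values of that randomizer lies between $mp^{j-1}(1-p)-1$ and $mp^{j-1}$. The two randomizers attached to each $4$-block govern the final split of a modular $4$-sum known to lie in $I_2$ into its two integral $2$-sums; by Lemma \ref{4_sums} the number of admissible pairs lies between $(m-2mp)(mp-mp^2-1)$ and $m^2p$. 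Since in both lemmas the bounds depend only on the interval containing the parent sum and not on its exact value, a downward induction on the tree shows that $N(s)$ lies between the product of all the lower bounds and the product of all the upper bounds, and those two products do not depend on $s$.

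It remains to collapse the per-split ratios. From $mp^j\ge mp^{\log k}=1/p\ge 1$ one gets $mp^{j-1}(1-p)-1\ge mp^{j-1}(1-2p)$, so each general split contributes a ratio at most $(1-2p)^{-1}$ for its one randomizer; from $mp^2\ge 1$ one gets $mp-mp^2-1\ge mp(1-2p)$, whence each $4$-block split contributes at most $(1-2p)^{-2}$ for its two randomizers. The general splits use $k/4-1$ randomizers in all and the $4$-blocks use $k/2$, so the ratio of the upper product to the lower product is at most $(1-2p)^{-(3k/4-1)}\le(1-2p)^{-3k/4}$, which is the claim. I expect the main obstacle to be the bookkeeping in the middle step rather than the estimates: one must check that the randomizers really can be fixed one level at a time, that the higher-level randomizers shift the relevant sub-sums only by $\pm r$ so that each lemma applies verbatim, and that the resulting counts genuinely multiply (being conditionally independent of the earlier choices once the relevant interval memberships are fixed). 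The two arithmetic reductions above and the ``small integral sum equals its modular reduction'' step are routine once $p/2<1/k$ is in hand.
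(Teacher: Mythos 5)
Your proposal follows essentially the same route as the paper's proof: count the randomizing sets for which a fixed solution is viable by working down the tree of sums with Lemma \ref{general_sums} and Lemma \ref{4_sums}, use $p/2 < 1/k$ to show viable integral $2$-sums force all integral $2^i$-sums into $I_i$, and collapse the per-split ratios to $(1-2p)^{-(3k/4-1)}$; your transfer to the output distribution is stated a bit more briefly than the paper's two-factor decomposition $\Pr[s\mbox{ viable}]\cdot\Pr[s\mbox{ solution}\mid s\mbox{ viable}]$, but it rests on the same observation that the oracle outputs a uniformly random viable solution. The only cosmetic difference is that you apply Lemma \ref{general_sums} to the top-level split as well, where the paper treats the $k$-sum $=0$ case exactly, giving the same final bound.
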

\begin{proof}
We have $\Pr[s \mbox{ solution}] = 
\Pr[s \mbox{ viable}]\Pr[s \mbox{ solution} \ | \ s \mbox{ viable}]$, 
where we leave unwritten the assumption that some solution is output.
We first bound $\Pr[s \mbox{ viable}]$.

We have $s_1 + \cdots + s_k = 0 \mod{m}$.  Using the same argument as in the first 
case of Lemma \ref{general_sums}, there are $mp^{\log{k}-1}$ values of $r$ such 
that $s_1 + \cdots + s_{k/2} + r \mod{m}$ and $s_{k/2+1}+\cdots+s_k-r \mod{m}$
both fall in $I_{\log{k}-1}$.

Using this as the base case and Lemma \ref{general_sums} as the inductive step, 
we have upper and lower bounds on the number of randomizers at each level.
Given randomizers that place modular sums in the proper interval, and in particular
that place modular $4$-sums in $I_2$, Lemma \ref{4_sums} gives us the number 
of randomizers that place integer $2$-sums in $I_1$.  Thus our modified solution
$s_1'+\cdots+s_k'$ is a modular solution with integer $2$-sums, which since 
$k \cdot \frac{mp}{2} < m$ implies that all integer $2^i$-sums lie in $I_i$, and hence 
that the solution is viable with respect to those randomizing sets.

There are a total of $m^{3k/4-1}$ randomizing sets.  Combining the bounds 
from Lemmas \ref{general_sums} and \ref{4_sums} give the following bounds
on the number for which $s$ is viable.

Setting $N = \frac{k}{4} + 2 \frac{k}{8} + 3 \frac{k}{16} + \cdots + (\log{k}-1) \frac{k}{k}$
an upper bound is given by
$$
(m^2p)^{k/4} \cdot (mp^2)^{k/8} \cdot (mp^3)^{k/16} \cdots (mp^{\log{k}-1})  
= m^{3k/4-1} \cdot p^{N} \enspace.
$$
Noting that $mp^{\log{k}-i} -mp^{\log{k}-i+1}-1 \geq  mp^{\log{k}-i}(1-2p)$
a lower bound is given by
\begin{align*}
& (m^2p(1-2p)^2)^{k/4} \cdot (mp^2(1-2p))^{k/8} \cdot (mp^3(1-2p))^{k/16}
 \cdots (mp^{\log{k}-1})(1-2p) \\
&=  m^{3k/4-1} \cdot (1-2p)^{3k/4-1} \cdot p^{N} \enspace .
\end{align*} 

Thus $\Pr[s \mbox{ viable}]$ is uniform on the upper bound and 
uniform within a factor of $(1-2p)^{3k/4}$ on the lower bound.

We now consider the second term.
Algorithm \ref{mod_oracle} is written so that for a given set of randomizers, 
a solution is output uniformly at random from the set of viable solutions.  Since the 
number of viable solutions is bounded by $\Pr[s \mbox{ viable}]$ 
times the number of solutions, the fact that $\Pr[s \mbox{viable}]$ is close to uniform
makes $\Pr[s \mbox{ solution} \ | \ s \mbox{ viable}]$ close to uniform, but with the 
factors on the upper and lower bounds switched.

Thus upper and lower bounds for the probability of the event $A$ are separated 
from uniform by a factor 
of $(1-2p)^{3k/4}$.
\end{proof}

% new section for better proofs
\section{The $k$-Set Algorithm}\label{sec:kset}

In this section we utilize the $k$-set modular oracle in designing an 
algorithm for the fixed weight subset sum problem.  Lyubashevsky \cite{Lyu05} was 
the first to leverage an algorithm for the modular subset sum 
problem out of an algorithm
for the $k$-set birthday problem.  Our modifications include dealing with the fixed weight
nature of the problem by employing a $k$-division, and dealing with the integral nature
of the problem by looping on the modular oracle until an integer solution is found.  The 
pseudocode appears as Algorithm \ref{alg:fixed_weight}.

\begin{algorithm}
\caption{Multi-set Algorithm for Fixed Weight Subset Sum}
\label{alg:fixed_weight}
\begin{algorithmic}[1]
\STATE {\bf Input:} positive integers $a_1, \dots, a_n$, target $t$, weight $\ell$, 
parameters $k$, $m$
\STATE {\bf Output:} $\vec{x} \in \{0,1\}^n$ of weight $\ell$ with 
$\sum_{i=1}^n a_i x_i = t$
\WHILE{no integer solution}
\STATE choose random $k$-division $(I_1, \dots, I_k)$
\STATE choose set $R$ of $\frac{3k}{4}-1$ random elements of $\Z/m\Z$.
\STATE form lists $L_1, \dots, L_k$ of size $m^{1/(\log{k}+1)}$ whose elements
are random subsets of weight $\ell/k$ from appropriate $I_j$, reduced modulo $m$
\STATE apply randomizers from $R$ to lists as described in Section \ref{subsect:random}
\STATE apply Algorithm \ref{mod_oracle} to $L_1, \dots, L_k$
\IF{success}
\STATE check if integer solution
\ENDIF
\ENDWHILE
\end{algorithmic}
\end{algorithm}

If $\ell$ is small compared to $k$, one could instead solve 
the $(n-\ell)$-weight subset sum problem with target $(\sum_{i=1}^n a_i) - t$.

Algorithm \ref{mod_oracle} takes as input uniformly distributed elements of 
$\Z/m\Z$.  By the work in \cite{ImpNao96}, if $a_1 \mod{m}, \dots, a_n \mod{m}$
are uniformly distributed over $\Z/m\Z$, then random $n/k$-length, $\ell/k$-weight
 subsets of these elements 
will be exponentially close to uniform as long as $m < {n/k \choose \ell/k}$.
If in addition we seed the lists with $poly(n) \cdot m^{1/(\log{k}+1)}$ elements, 
then combined with 
the work of Section \ref{subsect:random} we get a rigorous analysis 
of Algorithm \ref{alg:fixed_weight}.

We now prove Theorem \ref{thm:kset} (restated here for convenience)
by analyzing Algorithm \ref{alg:fixed_weight}.
To solve the integer fixed weight subset sum problem, we make an appropriate 
choice of $m$, which determines the resulting point on the time-space tradeoff curve.
The necessary assumption that $\frac{p}{2} < \frac{1}{k}$ in Theorem \ref{oracle_prob}
is satisfied by choosing $m$ and $k$ so that $\log{m} \geq 2(\log{k})^2$.

% main theorem of the section
\begin{theorem}
Choose parameters $m$ and $k$ so that $k$ is a power of $2$, 
$m < {n/k \choose \ell/k}$, and $\log{m} \geq 2(\log{k})^2$.
Assume that when reduced modulo $m$, 
the $a_i$ are uniformly random elements of $\Z/m\Z$.
Then the expected running time of Algorithm \ref{alg:fixed_weight} is
$\Ot(m^{1/(\log{k}+1)} \cdot {n \choose \ell}/m)$ and the algorithm uses 
$\Ot(m^{1/(\log{k}+1)})$  space.  This gives a point on the time/space tradeoff curve
$T \cdot S^{\log{k}} = {n \choose \ell}$.
\end{theorem}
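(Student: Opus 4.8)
The plan is to analyze a single pass through the main loop of Algorithm~\ref{alg:fixed_weight} and then bound the number of passes by a geometric random variable. Fix a promised integer solution $\sigma$ of weight $\ell$; write $N=m^{1/(\log k+1)}$ for the list size (up to the $\mathrm{poly}(n)$ seeding factor of Section~\ref{sec:mod_oracle}); and for a division $D=\{I_1,\dots,I_k\}$ let $\sigma_j$ be the weight-$\ell/k$ subset of $I_j$ carved out by $\sigma$. A pass returns $\sigma$ precisely when four things happen, over the three independent sources of randomness (the division $D$; the lists $L_1,\dots,L_k$ together with the randomizing set $R$; the oracle's internal coins): (i)~$D$ is good with respect to $\sigma$; (ii)~$\sigma_j\in L_j$ for every $j$, so that $\sigma$ appears as a $k$-set birthday solution on the lists actually produced; (iii)~on those lists Algorithm~\ref{mod_oracle} produces some output; and (iv)~the output equals $\sigma$. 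I would bound each probability and multiply.

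For (i), Proposition~\ref{rand_splitting} gives probability at least a constant times $\ell^{(1-k)/2}$. For (ii), each $L_j$ is a uniform size-$N$ subset of the $\binom{n/k}{\ell/k}$ weight-$\ell/k$ subsets of $I_j$ and $N\ll\binom{n/k}{\ell/k}$, so the probability is $(1+o(1))\bigl(N/\binom{n/k}{\ell/k}\bigr)^{k}$. For (iii), the analysis of Section~\ref{sec:mod_oracle} (via \cite{NishSib88}, using that list elements are near-uniform modulo $m$ by \cite{ImpNao96} under the hypothesis $m<\binom{n/k}{\ell/k}$, and that the merged lists at each level stay near-uniform by \cite{Sha08}) bounds this probability below by a positive constant $\rho$. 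For (iv), I would apply Theorem~\ref{oracle_prob}: on fixed lists on which $\sigma$ is a solution, the oracle run with a fresh $R$ outputs, conditioned on producing output, each representable solution with probability within a factor $(1-2p)^{3k/4}$ of uniform; and since the list elements summing to the target are near-uniform modulo $m$, the number of representable solutions is $N^k/m$ in expectation — using $N^k/m=m^{k/(\log k+1)-1}\ge 1$, valid because $k\ge\log k+1$ for $k$ a power of $2$ — hence $\widetilde O(N^k/m)$ with probability $1-o(1)$ by Markov's inequality, even conditioned on (ii). So (iv) given (ii) and (iii) has probability $\widetilde\Omega(m/N^k)$. Multiplying (i)--(iv), the factor $N^k$ from (ii) cancels against that from (iv), and a pass succeeds with probability
$$
\widetilde\Omega\!\left(\frac{\ell^{(1-k)/2}\,(1-2p)^{3k/4}\,\rho\,m}{\binom{n/k}{\ell/k}^{k}}\right).
$$

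It remains to simplify. Since $\binom{n/k}{\ell/k}^{k}$ counts exactly the weight-$\ell$ subsets of $\{1,\dots,n\}$ that are balanced across the $k$ blocks of a fixed partition, it is at most $\binom{n}{\ell}$ (and, by Stirling's formula as in the computation after Theorem~\ref{thm:SS}, it agrees with $\binom{n}{\ell}$ up to a factor polynomial in $n$); the hypothesis $\log m\ge 2(\log k)^2$ forces $p=m^{-1/(\log k+1)}\le 1/k$, which both supplies the hypothesis $\tfrac p2<\tfrac1k$ of Theorem~\ref{oracle_prob} and makes $(1-2p)^{3k/4}$ bounded below by a constant depending only on $k$; and $\rho$ is a positive constant. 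Hence a pass succeeds with probability at least $\mathrm{poly}(n)^{-1}\,m/\binom{n}{\ell}$, so the expected number of passes is $\widetilde O(\binom{n}{\ell}/m)$, the $\widetilde O$ absorbing the $\mathrm{poly}(n)$, constant-in-$k$, overhead (most notably the $\ell^{(k-1)/2}$ expected cost of drawing a good $k$-division) because $\binom{n}{\ell}/m$ is super-polynomial in the parameter range of interest. A single pass costs $\widetilde O(N)=\widetilde O(m^{1/(\log k+1)})$: building and randomizing the lists is that cheap, Algorithm~\ref{mod_oracle} runs in $\widetilde O(m^{1/(\log k+1)})$ time and space by Section~\ref{sec:mod_oracle}, and testing an output for being an integer solution is polynomial. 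Multiplying gives expected time $\widetilde O(m^{1/(\log k+1)}\cdot\binom{n}{\ell}/m)$ and space $\widetilde O(m^{1/(\log k+1)})$, nothing being retained between passes. Finally, with $T=m^{1/(\log k+1)}\binom{n}{\ell}/m$ and $S=m^{1/(\log k+1)}$ one has $S^{\log k}=m^{\log k/(\log k+1)}$, so $T\cdot S^{\log k}=m^{(1+\log k)/(\log k+1)}\cdot\binom{n}{\ell}/m=\binom{n}{\ell}$.

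The step I expect to be the main obstacle is (iii)--(iv): justifying that the list elements produced by the algorithm — and the lists obtained from them by merging at each of the $\log k$ levels — are close enough to uniform that Theorem~\ref{oracle_prob} applies and that the expected count of representable solutions is indeed $N^k/m$; this near-uniformity is exactly what the rigorous analyses of \cite{Sha08} and \cite{thesis} (together with \cite{ImpNao96} for the initial lists) establish. Once that is granted, the rest is bookkeeping with Stirling's formula and the parameter inequalities.
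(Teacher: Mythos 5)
Your proposal is correct and follows the same overall architecture as the paper's proof: bound the per-iteration success probability as a product of the probability of a good division (Proposition \ref{rand_splitting}), the probability the oracle outputs something (the Section \ref{sec:mod_oracle} analysis), and the probability that the output is the sought integer solution (Impagliazzo--Naor plus Theorem \ref{oracle_prob}); then multiply the reciprocal by the $\Ot(m^{1/(\log k+1)})$ per-iteration cost. The one genuine difference is how you handle the third factor. The paper counts the modular solutions globally --- roughly ${n \choose \ell}/m$ weight-$\ell$ subsets summing to $t$ modulo $m$ --- and asserts via Theorem \ref{oracle_prob} that the output is near-uniform over that set, giving probability about $m/{n \choose \ell}$. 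You instead split this into the probability that the pieces of the fixed solution $\sigma$ land in the sampled lists, $\bigl(N/{n/k \choose \ell/k}\bigr)^k$, times the probability that the oracle selects $\sigma$ among the $\approx N^k/m$ solutions representable on those lists; the $N^k$ cancels and you recover $m/{n/k \choose \ell/k}^k$, which you then relate to $m/{n \choose \ell}$ up to a $\mathrm{poly}(n)$ factor (for constant $k$). This finer decomposition is actually the more faithful use of Theorem \ref{oracle_prob}, which is stated for \emph{fixed} lists $L_1,\dots,L_k$, and it makes explicit the step the paper elides, namely that near-uniformity over the list-representable solutions combined with uniform sampling of the lists yields near-uniformity over all modular solutions. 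Both routes land on the same bound, and your identification of the near-uniformity claims at each merge level (the \cite{Sha08}/\cite{ImpNao96} inputs) as the real load-bearing step matches where the paper, too, defers to prior work.
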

\begin{proof}
The probability that Algorithm \ref{alg:fixed_weight} finds a solution on a particular
interation of the {\bf while} loop is the product of three probabilities:
the probability that the $k$-division is good with respect to some unknown solution,  
the probability that Algorithm \ref{mod_oracle} succeeds, 
and the probability 
that the modular solution found by Algorithm \ref{mod_oracle} 
is also the integer solution.

By Proposition \ref{rand_splitting} the first term is greater than 
$\ell^{\frac{1-k}{2}}$.  The second probability is greater than some fixed 
$\epsilon$ by the previous work outlined in Section \ref{sec:mod_oracle}.
For the third term, we first call upon a theorem of Implagliazzo and Naor
\cite{ImpNao96} (proven using the leftover hash lemma) which tells us that 
with the $a_i$ drawn uniformly at random from $\Z/m\Z$ and $m < {n \choose \ell}$, 
the distribution of random $\ell$-weight subsets is exponentially close 
to uniform.  Thus we expect the number of 
modular solutions to be a constant times ${n \choose \ell}/m$. 
By Theorem \ref{oracle_prob} we conclude that 
the third probability factor is greater than 
$(1-2p)^{3k/4} \cdot m/{n \choose \ell}$.  
Note that $(1-2p)^{3k/4} \geq 1 - \frac{3k}{2} p \geq \frac{1}{2}$
since $\log{m} \geq 2(\log{k})^2$ implies 
$p = m^{-1/(\log{k}+1)} \leq \frac{1}{3k}$.

Thus the expected number of iterations of the {\bf while} loop is 
$$
O\left(\epsilon \ell^{\frac{k-1}{2}} \cdot \left. 2 {n \choose \ell}
\right/ m  \right)  \enspace .
$$

The cost of each iteration is dominated by Algorithm \ref{mod_oracle}, which 
takes $\Ot(m^{1/(\log{k}+1)})$ time and space.

Thus Algorithm \ref{alg:fixed_weight} takes expected time 
$\Ot(m^{1/(\log{k}+1)} \cdot {n \choose \ell}/m)$ and space
$\Ot(m^{1/(\log{k}+1)})$, which is a point on the time and space tradeoff curve 
$T \cdot S^{\log{k}} = {n \choose \ell}$.
\end{proof}

As an example of parameter choices in action, suppose we wish to solve an 
integer fixed weight subset sum problem with an $8$-set birthday algorithm.
Our conjectural maximal choice of $m$ is ${n/8 \choose \ell/8}^4$, which is 
approximately ${n \choose \ell}^{1/2}$.  Thus we expect the problem to be solved
in time $\Ot({n \choose \ell}^{1/8}{n \choose \ell}^{1/2})$ and space 
$\Ot({n \choose \ell}^{1/8})$.

Note that Algorithm \ref{alg:fixed_weight} is highly parallelizable, since running it 
simultaneously on $N$ processors increases the probability of success by 
a factor of $N$. 

% new section - knapsack
\section{Application to Knapsack Cryptosystems}\label{sec:knapsack}

Knapsack cryptosystem is the term used for a class of public key cryptosystems 
whose underlying hard problem is the integer subset sum problem. 
Though few have remained unbroken, 
the search for knapsack cryptosystems remains popular due to 
their fast encryption and easy implementation.

A knapsack cryptosystem is defined abstractly as follows.  We have a public key
$(a_1, \dots, a_n)$ defining a hard subset sum problem, and a private key which transforms 
the hard problem into an easy subset sum problem.  To send a message $\vec{x} \in \{0,1\}^n$, 
a user computes $t = \sum_{i=1}^n a_i x_i$ and sends it.  The receiver, who has the private key,
transforms the problem and then solves the easy subset sum problem to recover $\vec{x}$.

There are two main attacks on knapsack cryptosystems.  First, there are key attacks which 
attempt to recover the easy subset sum problem from the public key.  Second, there are message
attacks which attempt to recover the message by solving the hard subset sum problem
$a_1 x_1 + \cdots + a_n x_n = t$.  
Key attacks are not our concern in this paper, we simply note that many systems
have succumbed to such attacks, the seminal cryptosystem of Merkle-Hellman \cite{MH78}
among them.
We focus instead 
on message attacks, which are equivalent to solving the subset sum problem or its variants.

The most successful message attack in theory and in practice is the low-density attack that 
reduces the subset sum problem to the shortest vector problem or the closest vector problem, 
discussed in Section \ref{sec:results}.
Since unique decryption requires $2^n \leq \sum_{i=1}^n a_i$, and hence that the density be 
no more than a little above $1$, these results pose a conundrum for the knapsack designer.
As a result, modern designs  have relied on fixing the hamming weight of allowed messages, 
so that the underlying hard problem becomes the fixed weight subset sum problem.  
This began with Chor-Rivest \cite{ChRiv88} and continues into the present 
with the notable OTU scheme \cite{OTU00} and its non-quantum variant \cite{KateGold07}.
In this way
$n$ can be made great enough so that the density is above one, while the information 
density stays below one to preserve unique decryption.  As an added bonus, the fixed weight 
subset sum problem has received much less attention in the literature, and so 
message attacks
remain in a primitive state.  Until recently the only known algorithm was 
the square root time-space
tradeoff algorithm in \cite[Section 7.3]{ChRiv88}.  

Here we have only scratched the surface of the vast literature on knapsack cryptosystems.
For further information consult the survey \cite{Odl90}.

The new result in this paper is Theorem \ref{thm:SS} from which we immediately get a 
message attack that takes square root time and fourth root space.  Theorem \ref{thm:kset}
is less interesting from this perspective because the large constant and polynomial terms, 
along with the sharp upper bound on the size of $m$, mean that seldom would the 
$k$-division algorithm reach even square root time in practice.

% section for data
\section{Data and Conclusions}\label{sec:data}

In this section we explore experimentally two questions related to 
Algorithm \ref{alg:fixed_weight}.  The first is to measure the number 
of times Algorithm \ref{mod_oracle} succeeds before an integer solution is 
found, and to compare that to the expected number ${n \choose \ell}/m$.
The second is to measure the success probability of Algorithm 
\ref{mod_oracle} when the modular information density is pushed lower 
than Theorem \ref{thm:kset} requires.  In particular, $m$ cannot be larger than 
${n/k \choose \ell/k}^{\log{k}+1}$ since otherwise there will not be enough 
weight $\ell/k$ subsets to fill the lists $L_j$, so we choose $m$ between 
${n/k \choose \ell/k}$ and ${n/k \choose \ell/k}^{\log{k}+1}$.

We implemented $2$-set, $4$-set, and $8$-set algorithms for the modular subset sum 
problem and applied them to the integer subset sum problem.  We chose not to explore
the additional impact of searching for a $k$-division, since the probability 
calculation is straightforward.
We ran these algorithms on a desktop workstation 
on problems with $n$ equal to $24$ and an integer density
of $0.9$.

In the tables that follow $d_m$ denotes the modular density.  Each entry represents
the mean over ten trials, except those marked with a $*$ which represent  
the result after one trial.  Let $N_o$ be the number of modular oracle 
successes before an integer solution is found.

\bigskip

\begin{tabular}{|c|c|c|c|c|c|c|}
\hline
& \multicolumn{2}{|c|}{$d_m = 1.5$} & \multicolumn{2}{|c|}{$d_m = 2$} & 
            \multicolumn{2}{|c|}{$d_m=4$} \\ \hline
& $N_o$ & $\E[N_o]$ & $N_o$ & $\E[N_o]$ & $N_o$ & $\E[N_o]$ \\ \hline
$2$-set & $209$ & $256$ & $1955$  & $4096$ & $353000$ & $262000$ \\ \hline
$4$-set & $168$ & $256$ & $5436$  & $4096$ & $260000$ & $262000$ \\ \hline
$8$-set & $265^*$ & $256$ & $1831$  & $4096$ & $330000$ & $262000$ \\ \hline
\end{tabular}
 
\bigskip

The next table explores the effect that parameters $m$ and $k$ have on 
Algorithm \ref{alg:fixed_weight}.

\bigskip

\begin{tabular}{|c|c|c|c|c|c|c|}
\hline
& \multicolumn{2}{|c|}{$d_m = 1.5$} & \multicolumn{2}{|c|}{$d_m =2$} & 
         \multicolumn{2}{|c|}{$d_m = 4$} \\ \hline
& success \% & time (s) & success \% & time (s) & success \% & time (s) \\ \hline
$2$-set & $58.9$ \% & $15$ & $61.4$ \% & $28$ & $58.1$ \% & $466$ \\ \hline
$4$-set & $19.8$ \% & $121$ & $40.5$ \% & $336$ & $46.7$ \% & $1594$ \\ \hline
$8$-set & $0.7^*$ \% & $11058^*$ & $11.9$ \% & $945$ & $57.2$ \% & $6069$ \\ \hline
\end{tabular} 

\bigskip

Taken together, this data supports our heuristic analysis of Algorithm 
\ref{alg:fixed_weight}.  We see that the modular oracle succeeds with
some constant probability, and that the number of successful oracle calls needed
is roughly the expected number (though the variance is quite large).

We also see that despite a lower success percentage, choosing 
$d_m$ as small as possible results in a faster running time.
There is a boundary beyond which the algorithm succeeds too rarely to 
be of any use, as exemplified by the $8$-set algorithm with $d_m = 1.5$.
A reasonable conjecture places this boundary 
at $d_m = \frac{k}{\log{k}+1}$, since below this point, there are not 
enough subsets to fill the lists $L_1, \dots, L_k$ with $m^{1/(\log{k}+1)}$ 
elements.  

As $k$ increases the overhead associated with the more complicated algorithms outstrips 
their asymptotic improvement, at least for $n=24$.  It is unclear how large $n$
will have to be before the $8$-set algorithm is faster than the $2$-set algorithm
for $d_m=4$.  

% stuff about when $k$ does not divide $n$, $\ell$
\section{Splitting Systems in the Indivisibility Case}\label{sec:gen_splitting}

In Section \ref{sec:splitting} we presented $(n, \ell, k)$-splitting systems
and proved their existence under the assumption that $n$ and $\ell$ were divisible by $k$.
In this section we relax this restriction, showing that splitting systems 
exist when $n$, $\ell$ are any positive integers greater than $k$.  Let 
positive integers 
$r_1$, $r_2$ be defined by 
 $n = k \cdot \floor{n/k} + r_1$ and $\ell = k \cdot \floor{\ell/k} + r_2$.

\begin{definition}
A \emph{$(n, \ell, k)$-splitting system} is a set $X$ of $n$ indices along with a set 
$\mathcal{D}$ of divisions, where each division is itself a set $\{I_1, \dots, I_k\}$
of subsets of indices.  Here the $I_j$ partition $X$ and their sizes satisfy
$|I_1| = \cdots = |I_{k-1}| = \floor{n/k}$, $|I_k| = \floor{n/k}+r_1$.  A splitting system
has the property that for every $Y \subseteq X$ such that $|Y| = \ell$, there exists a division
$\{I_1, \dots, I_k\} \in \mathcal{D}$ such that $|Y \cap I_j| = \floor{\ell/k}$ for 
$1 \leq j \leq k-1$ and $|Y \cap I_k| = \floor{\ell/k}+r_2$.
\end{definition}

Again, with $n, \ell, Y$ understood as parameters of a fixed weight subset sum problem 
we are interested in solving, we refer to an $(n, \ell, k)$-splitting system as a $k$-set 
splitting system.

Most likely a better strategy in practice would be to
spread the extra weight among the $I_j$ rather than assigning it all to $I_k$.  
This definition was chosen to quickly demonstrate that nondivision poses no 
barrier in theory.
The key result is to prove the existence of this more general structure.  In order to do 
this, we will first find $I_1, \dots, I_{k-1}$, and leave the remainder of $X$ to $I_k$.
Our candidates will be 
$$B_i^{(n)} = \{i+j \mod{n} \ | \ 0 \leq j \leq \floor{n/k} \} \enspace .$$
Given a fixed $Y \subset X$ of size $\ell$, we 
define a function $\nu$ be $\nu(i) =  |B_i \cap Y| - \floor{\ell/k} $.

\begin{proposition}
There exists a $k$-set splitting system with fewer than $n^{k-1}$ divisions.
\end{proposition}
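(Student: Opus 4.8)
The plan is to mimic the proof of Proposition \ref{det_splitting}, but now carefully tracking the extra remainders $r_1$ and $r_2$ that arise from indivisibility. The key structural claim is an analogue of Proposition \ref{find_one}: given a fixed $Y \subseteq X$ with $|Y| = \ell$, one of the sets $B_i^{(n)}$ of size $\lfloor n/k \rfloor$ satisfies $|B_i^{(n)} \cap Y| = \lfloor \ell/k \rfloor$. To see this, consider the $k$ disjoint blocks $B_0^{(n)}, B_{\lfloor n/k \rfloor}^{(n)}, \dots$; since $n = k\lfloor n/k \rfloor + r_1$, these $k$ blocks of size $\lfloor n/k\rfloor$ cover all but $r_1$ of the indices, and similarly the total $Y$-weight they capture is between $\ell - r_2$ and $\ell$ (handling the leftover $r_1$ indices crudely, of which at most $r_2$ lie in $Y$). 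Hence their average $Y$-weight is below $\lceil \ell/k \rceil$ but the total is at least $\ell - r_2 \geq k\lfloor \ell/k\rfloor$, so at least one block has $Y$-weight $\geq \lfloor \ell/k \rfloor$ and at least one has $Y$-weight $\leq \lfloor \ell/k \rfloor$. The discrete intermediate value step then applies to $\nu$: since $|\nu(i) - \nu(i+1)| \leq 1$ as $i$ ranges cyclically, and $\nu$ takes a nonnegative and a nonpositive value, it must hit $0$ somewhere.

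Once this analogue of Proposition \ref{find_one} is in hand, the construction proceeds exactly as in Proposition \ref{det_splitting}: find $B_i^{(n)}$ with $|B_i^{(n)} \cap Y| = \lfloor \ell/k \rfloor$, call it $I_1$, reorder the $a_i$ so that $I_1$ occupies a fixed block of indices, and recurse on the remaining $n - \lfloor n/k\rfloor$ indices, which now carry weight $\ell - \lfloor\ell/k\rfloor$. After finding $I_1, \dots, I_{k-1}$ this way — each with exactly $\lfloor n/k\rfloor$ indices and $\lfloor\ell/k\rfloor$ of $Y$ — the leftover $\lfloor n/k\rfloor + r_1$ indices form $I_k$, and they automatically contain $\ell - (k-1)\lfloor\ell/k\rfloor = \lfloor\ell/k\rfloor + r_2$ elements of $Y$, matching the definition. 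One subtlety I would check is that at each recursive stage the residual problem still has enough room for the intermediate value argument, i.e. that the current leftover count never drops below what the step needs; this follows since we always remove exactly $\lfloor n/k\rfloor \geq 1$ indices and the weight removed is exactly $\lfloor \ell/k\rfloor$, so the residual floors behave consistently.

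The division count is then bounded exactly as before: searching for $I_j$ requires trying at most $n - (j-1)\lfloor n/k\rfloor$ choices of starting index (the number of distinct cyclic windows $B_i$ on the residual index set), so the total number of divisions is at most
\[
n \left(n - \lfloor n/k \rfloor\right)\left(n - 2\lfloor n/k \rfloor\right) \cdots \left(n - (k-2)\lfloor n/k \rfloor\right) < n^{k-1} \enspace .
\]

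The main obstacle I anticipate is purely bookkeeping: verifying that the analogue of Proposition \ref{find_one} genuinely delivers weight \emph{exactly} $\lfloor \ell/k \rfloor$ rather than merely something close, when the covering blocks leave $r_1$ stray indices. The cleanest fix is to phrase the intermediate value argument not on the $k$ disjoint blocks but on the full cyclic family $B_0^{(n)}, B_1^{(n)}, \dots, B_{n-1}^{(n)}$ of windows of size $\lfloor n/k\rfloor$: it suffices to exhibit one window with $Y$-weight $\geq \lfloor\ell/k\rfloor$ and one with $Y$-weight $\leq \lfloor\ell/k\rfloor$, after which continuity of $\nu$ (in the sense $|\nu(i)-\nu(i+1)|\leq 1$) forces an exact hit. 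The existence of such a pair follows from a short counting/averaging argument on the disjoint blocks as sketched above. Everything else is a routine adaptation of the divisible case.
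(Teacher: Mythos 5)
Your overall strategy is the same as the paper's: build the division greedily from cyclic windows $B_i$ of size $\lfloor n/k\rfloor$, use the discrete intermediate value property of $\nu(i)=|B_i\cap Y|-\lfloor \ell/k\rfloor$ at each stage, and multiply the window counts to get the bound $n^{k-1}$. Your first-stage argument is essentially sound apart from one slip: the $r_1$ leftover indices can contain up to $r_1$ (not $r_2$) elements of $Y$, so the total weight of the $k$ disjoint blocks is only bounded below by $\ell-r_1$. This still closes, since $\ell-r_1=k\lfloor\ell/k\rfloor+r_2-r_1>k(\lfloor\ell/k\rfloor-1)$ because $r_1\le k-1$.

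The genuine gap is exactly the ``subtlety'' you deferred and then dismissed: the recursive stages. At stage $j$ the residual instance has $n_j=(k-j+1)\lfloor n/k\rfloor+r_1$ indices carrying weight $\ell_j=(k-j+1)\lfloor\ell/k\rfloor+r_2$, so only $K=k-j+1$ disjoint blocks are available while $r_1$ and $r_2$ are undiminished. Your averaging argument needs $r_2<K$ (to produce a block of weight at most $\lfloor\ell/k\rfloor$) and $r_1-r_2<K$ (to produce one of weight at least $\lfloor\ell/k\rfloor$); both can fail once $K$ drops to $2$ or $3$ while $r_1,r_2$ can be as large as $k-1$. Moreover this is not merely a defect of the averaging: the existence claim itself can fail at a late stage. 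Take $k=4$, $\lfloor n/k\rfloor=6$, $\lfloor\ell/k\rfloor=1$, $r_1=r_2=3$ (so $n=27$, $\ell=7$). At stage $j=3$ the residual instance has $15$ cyclic indices carrying $5$ elements of $Y$; if those elements sit at positions $0,3,6,9,12$, then every window of size $6$ meets $Y$ in exactly $2$ points and none in the required $1$, so no choice of $I_3$ exists. Hence ``the residual floors behave consistently'' is not enough: you would need either to show that such configurations cannot arise from the earlier greedy choices, or to modify the construction (for instance by spreading the excess $r_1$ and $r_2$ across the blocks rather than assigning all of it to $I_k$). For what it is worth, the paper's own proof disposes of the recursive stages with the phrase ``using the same reasoning as above,'' and that reasoning, instantiated at stage $j$, requires $r_2\le k-j$; so your proposal inherits rather than introduces this weakness, but it is a real gap in both arguments.
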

\begin{proof}
Our initial goal is to prove that there must exist an $i$ with $\nu(i) = 0$.
Consider $B_0$, $B_{\floor{n/k}}$, $B_{2 \floor{n/k}}, \dots, B_{(k-2)\floor{n/k}}$.
Define $B$ to be the remainder of the indices of $X$.  If $\nu(i) = 0$ for one of
$i = 0, \floor{n/k}, \dots, (k-2)\floor{n/k}$ then we are done.  If not, 
we wish to find $i, i'$ such that $\nu(i), \nu(i')$ have opposite signs.

If $\nu(i) > 0$ for each of 
$i = 0, \floor{n/k}, \dots, (k-2)\floor{n/k}$, then the combined weight
of the corresponding $B_i$ is at least $(k-1) \floor{\ell/k} + k-1$ and so $B$ must have weight
less than $\floor{\ell/k} + r_2 - (k-1) \leq \floor{\ell/k}$.  
Thus in particular $B_{(k-1)\floor{n/k}}$, 
the first $\floor{n/k}$ indices of $B$, must have 
weight less than $\floor{\ell/k}$.   

If $\nu(i) < 0$ for $i = 0, \floor{n/k}, \dots, (k-2)\floor{n/k}$, then the combined weight 
of the corresponding $B_i$ is at most $(k-1) \floor{\ell/k} - (k-1)$ and so $B$ must have 
weight greater than $\floor{\ell/k} + r_2+k-1$.  Then $B_{(k-1)\floor{n/k}}$, the first 
$\floor{n/k}$ indices of $B$, must have weight greater than $\floor{\ell/k}$.  For if not, 
the weight of $B$ is at most $\floor{\ell/k}+r_1 \leq \floor{\ell/k}+r_2+k-1$, a contradiction.

In either case there is an $i$ with $\nu(i) > 0$ and an $i'$ with $\nu(i') < 0$.  Since 
\linebreak $| \nu(i) - \nu(i+1)| \leq 1$, there must be an $i$ with $\nu(i) = 0$.  
Label the corresponding set $I_1$.

We now remove the indices in $I_1$ from consideration, relabel the indices
$0, \dots, n - \floor{n/k}$, and seek an $i$ such that $B_i^{(n - \floor{n/k})}$ has weight
$\floor{\ell/k}$.  Using the same reasoning as above, one must exist.

In this fashion $I_1, \dots, I_{k-1}$ can be found.  The remaining indices make up $I_k$.
The number of divisions needed to satisfy this process is 
$$
n(n - \floor{n/k})(n - 2 \floor{n/k}) \cdots (n - (k-2)\floor{n/k}) < n^{k-1} \enspace .
$$
\end{proof}

Next we discuss the effect on running times.  For the Shroeppel-Shamir algorithm, 
the main terms of the complexity bounds become
 ${\floor{n/4}+3 \choose \floor{\ell/4}+3}^2$
time and ${\floor{n/4}+3 \choose \floor{\ell/4}+3}$ space.  Since
$$
{\floor{n/4}+3 \choose \floor{\ell/4}+3} = \left(\frac{n}{\ell}\right)^3 
{\floor{n/4} \choose \floor{\ell/4}}
$$
the complexity is worse by at most a polynomial factor.  A similar result holds for 
the modular oracle.  The polynomial factor becomes $(n/\ell)^k$, which is polynomial 
for constant $k$.

\bibliography{knapsack}
\bibliographystyle{amsplain}

\end{document}